\documentclass{amsart}
\usepackage{amssymb,latexsym}
\usepackage{amscd,amsthm}

\usepackage[all]{xy}
\usepackage{tikz}

\usepackage{tikz-cd}

\newtheorem{theorem}{Theorem}[section]

\newtheorem{lemma}[theorem]{Lemma}
\newtheorem{proposition}[theorem]{Proposition}
\newtheorem{corollary}[theorem]{Corollary}

\theoremstyle{definition}
\newtheorem{definition}[theorem]{Definition}

\newtheorem{example}[theorem]{Example}

\DeclareMathOperator{\Ext}{Ext}
\DeclareMathOperator{\Hom}{Hom}

\DeclareMathOperator{\im}{Im}

\DeclareMathOperator{\Ker}{Ker}

\newcommand{\cat}[1]{\mathcal{#1}}           

\newcommand{\class}[1]{\mathcal{#1}}   
\newcommand{\N}{\mathbb{N}}
\newcommand{\Z}{\mathbb{Z}}

\newcommand{\ch}{\textnormal{Ch}(R)}

\newcommand{\cha}[1]{\textnormal{Ch}(\mathcal{#1})}

\newcommand{\tilclass}[1]{\widetilde{\class{#1}}}
\newcommand{\dgclass}[1]{dg\widetilde{\class{#1}}}
\newcommand{\dwclass}[1]{dw\widetilde{\class{#1}}}
\newcommand{\exclass}[1]{ex\widetilde{\class{#1}}}

\newcommand{\rightperp}[1]{#1^{\perp}}
\newcommand{\leftperp}[1]{{}^\perp #1}

\begin{document}

\title[Homological dimensions over coherent regular rings]{Homological dimensions of complexes over coherent regular rings}

\author{James Gillespie}
\address{J.G. \ Ramapo College of New Jersey \\
         School of Theoretical and Applied Science \\
         505 Ramapo Valley Road \\
         Mahwah, NJ 07430\\ U.S.A.}
\email[Jim Gillespie]{jgillesp@ramapo.edu}
\urladdr{http://pages.ramapo.edu/~jgillesp/}

\author{Alina Iacob}
\address{A.I. \ Department of Mathematical Sciences \\
         Georgia Southern University \\
         Statesboro (GA) 30460-8093 \\ U.S.A.}
\email[Alina Iacob]{aiacob@GeorgiaSouthern.edu}

\date{\today}

\keywords{coherent regular ring, homological dimension, unbounded complex}

\thanks{2020 Mathematics Subject Classification. 	16E65, 18G25, 18N40}

\begin{abstract}
We show that Iacob--Iyengar's answer to a question of Avromov--Foxby extends from Noetherian to coherent rings. In particular, a coherent ring $R$ is regular if and only if the injective (resp. projective) dimension of each complex X of $R$-modules agrees with its graded-injective (resp. graded-projective) dimension.  The same is shown for the analogous dimensions based on FP-injective $R$-modules, and on flat $R$-modules. 
\end{abstract}

\maketitle

\section{Introduction}

Two different notions of homological dimensions for unbounded chain  complexes of modules over a ring were introduced by Avramov and Foxby in~\cite{avramov-foxby-hom-dims-complexes}.
In particular, given a ring $R$, let $\dgclass{P}$ denote the class of all \emph{DG-projective}  chain complexes of $R$-modules; they are the  complexes of projective modules that are K-projective in the sense of Spaltenstein~\cite{spaltenstein}. Then writing $X \simeq Y$ to mean there exists  a quasi-isomorphism between $X$ and $Y$, the  \emph{projective dimension} of a chain complex $X$ is defined:
\begin{equation}\label{eq-pd}\tag{$\dagger$}
\text{pd}_R(X) := \text{min}\{\,n \in \mathbb{Z} \,|\, X \simeq P  \text{ for some } P \in \dgclass{P} \text{ with } P_i = 0,  \ \forall \,i > n\,\}.
\end{equation}
If $X \simeq P\in\dgclass{P}$ implies $P$ is unbounded above, we set $\text{pd}_R(X) = \infty$.  At the other extreme, $X\simeq 0$ means $X$ is exact, and in this case we set $\text{pd}_R(X) = -\infty$.
If $X$ is a bounded below complex  or, in particular,  just an $R$-module concentrated in degree 0,  then this definition of $\text{pd}_R(X)$ agrees with the classical notion of the projective dimension of $X$ (\cite[Remark~2.8.P]{avramov-foxby-hom-dims-complexes}).

But one could instead replace  $\dgclass{P}$  with the class $\dwclass{P}$, of all \emph{degreewise projective} (i.e. \emph{graded-projective}) complexes. This defines the \emph{graded-projective dimension} of $X$:
\begin{equation}\label{eq-pd}\tag{$*$}
\text{gr-pd}_R(X) :=  \text{min}\{\,n \in \mathbb{Z} \,|\, X \simeq P  \text{ for some } P \in \dwclass{P} \text{ with } P_i = 0,  \ \forall \,i > n\,\}.\end{equation}
Since  $\dgclass{P}\subseteq\dwclass{P}$ is, in general, a strict containment we have the inequality $\text{gr-pd}_R(X)\leq \text{pd}_R(X)$.
After defining $\text{pd}_R(X)$ and $\text{gr-pd}_R(X)$, as well as their injective variants $\text{id}_R(X)$ and  $\text{gr-id}_R(X)$,  Avramov and Foxby showed that $\text{pd}_R(X) = \text{gr-pd}_R(X)$ and $\text{id}_R(X) = \text{gr-id}_R(X)$ whenever $R$ is a ring of finite global dimension (\cite[Corollary~3.5]{avramov-foxby-hom-dims-complexes}). They then asked (\cite[Question~3.8]{avramov-foxby-hom-dims-complexes}) if the converse holds: Does having the equalities $\text{pd}_R(X) = \text{gr-pd}_R(X)$, or $\text{id}_R(X) = \text{gr-id}_R(X)$, for all complexes $X$ imply $R$ has finite global dimension?
This question was settled for Noetherian rings by Iacob and Iyengar  in~\cite{Iacob-Iyengar-regular}. 
They showed that when $R$ is Notherian, the two dimensions coincide if and only if $R$ is a regular ring.  We show in the current paper that the same is true for the larger class of all coherent rings. Our definition of \emph{regular} is the standard one often applied to a general ring; see Definition~\ref{definition-regular-ring}. We note that any ring of finite global dimension is regular, but there are coherent regular rings of infinite global dimension. In fact, among the class of coherent rings, the same statements are true if we replace `global dimension' with \emph{weak global dimension} (i.e. \emph{Tor-dimension}); see Example~\ref{example-regular-coherent-rings}.

Much of the essentials for the work in this paper were already worked out in~\cite{Iacob-Iyengar-regular}, but  the statements we give here for coherent rings give a more complete picture and unify various results from~\cite{Iacob-Iyengar-regular}.
Ultimately, given a (left) coherent ring $R$, we prove a slew of statements that are equivalent to the statement that $R$ is (left) regular. See Theorem~\ref{prop-characterize-fp}, Proposition~\ref{prop-regular-coderived-char}, and Theorems~\ref{them-coherent-regular-characterizations}--\ref{them-coherent-regular-characterizations-flat}. 

The results of this paper rely on some facts proved by Stovicek in~\cite{stovicek-purity}. In particular we use that the homotopy category of injective $R$-modules is compactly generated whenever $R$ is a coherent ring. We reprove the needed results here since we are only interested in modules over a ring, and the arguments we have found for this special case are easier than the general ones in~\cite{stovicek-purity}. The role of FP-injective modules is crucial. An $R$-module $M$ is called \emph{FP-injective}, or \emph{absolutely pure}, if $\Ext^1_R(F,M)=0$ for all finitely presented $R$-modules $F$.

This paper began after we proved Theorem~\ref{prop-characterize-fp}. We subsequently learned that, according to Glaz's book~\cite[Theorem~6.2.1]{glaz}, the equivalence of conditions (1) and  (2) of the proposotion goes back to Quentel~\cite{quentel}.
We also note that the forthcoming book~\cite[\S20.2]{lars-henrik-book}  contains further information on commutative Noetherian regular rings, including different proofs of many results from~\cite{Iacob-Iyengar-regular}. 

\

\emph{Notation and Terminology}:
Throughout the paper, $R$ denotes a ring with identity. We will favor the left side. In particular, when it is not explicitly stated, the words `module' and `ideal'  should be interpreted to mean left $R$-module and left ideal.  The abelian category of all (left) $R$-modules is denoted by  $R\textnormal{-Mod}$, and $\ch$ denotes the associated category of unbounded chain complexes of $R$-modules. We use homological notation for our chain complexes, so the differentials, which we denote by  $d$, lower the degree. For a given $R$-module $M$, we let $S^n(M)$ denote the chain complex consisting of $M$ concentrated in degree $n$, and 0 elsewhere. We let $D^n(M)$ denote the complex consisting of $1_M \colon M \xrightarrow{} M$ concentrated in degrees $n$ and $n-1$, and 0 elsewhere.

\section{Coherent regular rings}

\begin{definition}[Bertin \cite{bertin-coherent-regular}]\label{definition-regular-ring}
A  ring $R$ is \textbf{left regular}  if every finitely generated (left) ideal $I\subseteq R$ has $\text{pd}_R(I) < \infty$, that is, finite projective dimension. A  \textbf{right regular} ring is defined similarly and we say that $R$ is \textbf{regular} if it is both left and right regular.
\end{definition}

Recall that a ring $R$ is said to be  \textbf{left coherent} (resp. \textbf{right coherent})  if every  finitely generated left (resp. right)  ideal is finitely presented. A ring is called \textbf{coherent} if it is both left and right coherent. Any Noetherian ring is coherent. Example~\ref{example-regular-coherent-rings} provides examples of (not necessarily Noetherian) coherent regular rings.


It is well-known that a ring is left coherent if and only if the full subcategory of all finitely presented $R$-modules is abelian. This is equivalent to saying that the kernel of any epimorphism between finitely presented modules is always again finitely presented.
Thus a ring $R$ is left coherent and left regular if and only if each finitely generated ideal has a finite resolution by finitely generated projective modules.  Such a ring will be called a  \textbf{left coherent regular} ring and the following theorem provides further characterizations.


\begin{theorem}[Characterizations of coherent regular rings]\label{prop-characterize-fp}
The following are equivalent for a  general ring $R$.
\begin{enumerate}
\item $R$ is left coherent regular.
\item  $R$ is left coherent and every finitely presented module $M$ has $\text{pd}_R(M) < \infty$.
\item Each finitely generated left ideal has a finite resolution by finitely generated projective modules. 
\item Each finitely presented left $R$-module has a finite resolution by finitely generated projective  modules.
\item Every bounded chain complex of finitely presented modules  is  perfect. Said another way, each finitely presented chain complex is quasi-isomorphic to a finitely generated complex of projectives.
\end{enumerate}
\end{theorem}

  Of course, \emph{right coherent regular} rings are defined similarly, and we  say $R$ is a \textbf{coherent regular} ring if it is both left and right coherent regular.

\begin{proof}
The implication (1)$\impliedby$(2) is obvious. To prove
(1)$\implies$(2), 
 let $M$ be any finitely presented $R$-module. Then there is a surjective homomorphism $R^n \xrightarrow{} M$ where $R^n$ is a finitely generated free module and such that its kernel is finitely generated (in fact, it's even finitely presented). So to show $\text{pd}_R(M) < \infty$, it is enough to prove: \emph{Any finitely generated submodule $S \subseteq  R^n$ has $\text{pd}_R(M) < \infty$.} To prove this, let $\pi \colon R^n \xrightarrow{} R$ denote the projection onto the $n^{\textnormal{th}}$ coordinate. Then given a finitely generated $S\subseteq R^n$,  there is a commutative diagram as below, with exact rows and such that each vertical arrow is just an inclusion. (Here we are identifying $R^{n-1}$ with the submodule of $R^n$ consisting of the first $n-1$ coordinates.)
 $$\begin{CD}
     0   @>>> R^{n-1}\cap S    @>>>   S    @>>>   \pi(S)   @>>>    0 \\
    @.        @VVV    @VVV      @VVV   @.\\
     0   @>>>  R^{n-1} @>>>  R^n    @>\pi>>   R    @>>>    0 \\
\end{CD}$$  Since $R$ is regular, the finitely generated ideal $\pi(S)\subseteq R$  must  have finite projective dimension. Since $R$ is coherent, $\pi(S)$ must  be finitely presented and hence $R^{n-1}\cap S$ is finitely generated as well.  So by induction on $n$ we may assume that $R^{n-1}\cap S$ is also of finite projective dimension. Then $S$ is an extension of two modules of finite projective dimension and is itself of finite projective dimension.

Now we prove (2)$\implies$(5). If (2) is true, then any complex of the form $S^n(F)$, where $F$ is finitely presented, is a perfect complex. Indeed the coherence hypothesis allows us to take a finite resolution $P_* \xrightarrow{\epsilon} F \xrightarrow{} 0$ with each $P_i$ a finitely generated projective. This determines a surjective quasi-isomorphism $P_* \xrightarrow{\epsilon} S^n(F)$, proving $S^n(F)$ is a perfect complex. Now we note that each bounded chain complex of finitely presented modules is inductively built up from a finite number of extensions of such complexes of the form $S^n(F)$. At each step, the extension is a short exact sequence, and thus induces a triangle in the derived category, $D(R)$.  By induction,  2 out of 3 complexes in the triangle are perfect, so the third one must also be perfect since the perfect complexes form a triangulated subcategory of $D(R)$.

We now show (5)$\implies$(4).
Let $F$ be a finitely presented module. Then  $S^0(F)$  is quasi-isomorphic to a bounded complex $P$ of finitely generated projective modules. 
 So $H_i(P) = 0$ for all $i \neq 0$, and $F \cong H_0(P) = Z_0P/B_0P$. Moreover, since $P$ is bounded below, $Z_iP$ is a direct summand of $P_i$, for all $i \leq 0$. In particular, $Z_0P$ is finitely generated projective. Hence we have an exact complex$$0 \rightarrow P_n \xrightarrow{d_n} \cdots \rightarrow P_2 \xrightarrow{d_2} P_1 \xrightarrow{d'_1} Z_0P \xrightarrow{} H_0(P) \rightarrow 0.$$
 This is a finite resolution of $F \cong H_0(P)$ by finitely generated projective modules.

Statements (1) and (3) are clearly equivalent as commented previously, before the statement of the theorem. Similarly,  (2)$\implies$(4). 

It remains to show (4)$\implies$(2).  So assume that each finitely presented $R$-module has a finite resolution by finitely generated projective  modules. 
We claim that $R$ must be left coherent. So let $I$ be a finitely generated left ideal and the goal is to show that $I$ is in fact finitely presented. Certainly $R/I$ is finitely presented and so it has a finite resolution by finitely generated projective  modules. Note that each cycle module in any such resolution is necessarily finitely presented too. In particular there must be a short exact sequence $0 \xrightarrow{} F \xrightarrow{} P \xrightarrow{} R/I \xrightarrow{}  0$ where $P$ is finitely generated projective and $F$ is finitely presented. By Schanuel's Lemma we have that $I \bigoplus P \cong F \bigoplus R$. Since $F$ and $R$ are finitely presented it follows that  $I \bigoplus P$, and hence $I$, is finitely presented. 
\end{proof}

Sarah Glaz has studied commutative coherent rings, including those with the regularity condition.  Most of the  examples below can be found in her work. 

\begin{example}\label{example-regular-coherent-rings}
The following are examples of (left) regular coherent  rings.
\begin{enumerate}
\item Denote the weak global dimension (i.e. Tor-dimension) of a ring $R$ by $\text{w.dim}(R)$. If $R$ is left coherent, then by~\cite[Theorem~3.3]{stenstrom-fp} we have
$$\text{w.dim}(R) = \text{sup}\{ \text{pd}_R(F) \,|\, F \text{ finitely presented left } R\text{-module} \}.$$
It follows that any left coherent ring with $\text{w.dim}(R) < \infty$ is  left  regular.

\item  It is sometimes asserted that `von Neumann regular rings are unrelated to the usual notion of regular rings'. However, von Neumann regular rings are characterized  as  those rings $R$ such that $\text{w.dim}(R) =0$.  Any such ring is necessarily a (two-sided) coherent regular ring.


A ring $R$ is called \emph{(left) semihereditary} if all finitely generated (left) ideals of $R$ are projective, and this happens if and only if all finitely generated submodules of projectives are again projective. Such a ring is automatically left coherent and has $\text{w.dim}(R)\leq 1$ \cite[Theorem~4.2.19]{glaz}.

\item There exist coherent regular rings of infinite weak dimension. For instance, let $k$ be a field. Then the polynomial ring $k[x_1, x_2, \dots]$ is not Notherian. However, it is a (commutative)  coherent regular ring of infinite weak dimension; see~\cite[page 202]{glaz}.  Similarly, the power series ring $k[[x_1, x_2, \dots]]$ in infinitely many variables is a (local) coherent regular ring of infinite weak dimension; \cite[\S5 example]{glaz-historical-persp}.

\item Let $R$ be a commutative coherent regular ring. Then the localization $R_{\mathfrak{p}}$ at any prime ideal $\mathfrak{p}$ is also a coherent regular ring and even a domain; see~\cite[Them.~6.2.3 and Cor.~6.2.4]{glaz}.
\end{enumerate}
\end{example}

\section{Complexes of FP-injective and FP-projective modules}

Throughout this section, $R$ will be a left coherent ring. The class $\class{S}$ of all finitely presented modules is essentially small, and closed under kernels of epimorphisms. So  it cogenerates,  for example  by~\cite[Lemma~3.6]{gillespie-models-for-hocats-of-injectives}, a complete hereditary cotorsion pair
$$(\class{FP}, \class{FI}) := (\leftperp{(\rightperp{\class{S}})}, \rightperp{\class{S}}).$$
The modules in $\class{FI}$ are called \textbf{FP-injective} (or \textbf{absolutely pure}) and the modules in $\class{FP}$ are called \textbf{FP-projective}.\footnote{In~\cite[Theorem~4.1.6(b)]{trlifaj-book},  the class $\class{FP}$ is denoted by $\class{FS}$,   for a module is FP-projective if and only if it is a direct \emph{summand} of a transfinite extension of \emph{finitely} presented modules.}

By way of a sequence of easy lemmas, we will now prove a special case of a result of Stovicek; see~\cite[Props.~6.11 and~B.7]{stovicek-purity}. The proof we give here is similar, but we are able to simplify some points since we are only dealing with $R$-modules.

\begin{lemma}\label{lemma-restriction-cot-pairs}
Consider the fully exact subcategory $\class{FI}$, of all FP-injective modules along with its inherited Quillen exact structure.  Then $\class{FP} \cap \class{FI}$ is the class of projective objects of $\class{FI}$, and the injective objects of $\class{FI}$ coincide with the usual injective modules. Moreover:
\begin{enumerate}
\item The complete cotorsion pair $(\class{FP}, \class{FI})$ in $R$-Mod restricts to a complete cotorsion pair $(\class{FP}\cap\class{FI}, \class{FI})$ in the exact category $\class{FI}$; it is the canonical projective cotorsion pair in $\class{FI}$.
\item The canonical injective cotorsion pair $(All, \class{I}nj)$ in $R$-Mod restricts to a complete cotorsion pair $(\class{FI},  \class{I}nj)$ in the exact category $\class{FI}$; it is the canonical injective cotorsion pair in $\class{FI}$.
\end{enumerate}
\end{lemma}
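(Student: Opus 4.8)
The plan is to reduce everything to two structural features of the cotorsion pair $(\class{FP},\class{FI})$ that are already in hand: it is \emph{hereditary}, so $\class{FI}$ is closed under cokernels of monomorphisms (and $\class{FP}$ under kernels of epimorphisms), and it is \emph{complete}. First I would record that $\class{FI}=\rightperp{\class{S}}$ is closed under extensions, so it is genuinely a fully exact subcategory, and that for $M,N\in\class{FI}$ the Yoneda group $\Ext^1_{\class{FI}}(M,N)$ computed in the exact category $\class{FI}$ agrees with $\Ext^1_R(M,N)$: any extension of $N$ by $M$ in $R$-Mod automatically lands in $\class{FI}$ by extension-closedness, so the admissible short exact sequences in $\class{FI}$ are exactly the short exact sequences of $R$-modules with all three terms FP-injective. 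Granting this, the projective objects of $\class{FI}$ are those $P\in\class{FI}$ with $\Ext^1_R(P,N)=0$ for all $N\in\class{FI}$, that is, those $P$ lying in $\leftperp{\class{FI}}=\class{FP}$; hence the projectives of $\class{FI}$ are precisely $\class{FP}\cap\class{FI}$.

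To identify the injective objects of $\class{FI}$, the easy inclusion is that any injective $R$-module is FP-injective and is injective in $\class{FI}$, since $\Ext^1_R(-,E)$ already vanishes on all of $R$-Mod. For the converse, suppose $E\in\class{FI}$ is injective in the exact category $\class{FI}$. I would take the injective envelope $0\to E\to I(E)\to I(E)/E\to 0$ in $R$-Mod; because $\class{FI}$ is coresolving (heredity), the cokernel $I(E)/E$ is again FP-injective, so this is an admissible short exact sequence in $\class{FI}$. Injectivity of $E$ in $\class{FI}$ then forces it to split, exhibiting $E$ as a direct summand of the injective module $I(E)$, so $E$ is injective as an $R$-module. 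This is the one place where heredity does real work, and I expect it to be the main (though modest) obstacle.

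With the projectives and injectives of $\class{FI}$ identified, statements (1) and (2) follow by verifying the cotorsion-pair axioms inside $\class{FI}$, using throughout that $\Ext^1$ agrees with that of $R$-Mod. For (2), the pair $(\class{FI},\class{I}nj)$ is orthogonal because injective modules kill every $\Ext^1$; completeness is immediate, the special precover of $M\in\class{FI}$ being $0\to 0\to M\to M\to 0$ and the special preenvelope being the injective-hull sequence $0\to M\to I(M)\to I(M)/M\to 0$, whose cokernel is FP-injective as above. For (1), orthogonality of $(\class{FP}\cap\class{FI},\class{FI})$ holds since $\class{FP}=\leftperp{\class{FI}}$; the special preenvelope is the trivial sequence $0\to M\to M\to 0\to 0$, and the only genuine point is the special precover. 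Here I would take a special $\class{FP}$-precover $0\to B\to A\to M\to 0$ supplied by completeness of $(\class{FP},\class{FI})$ in $R$-Mod, with $A\in\class{FP}$ and $B\in\class{FI}$; since $M,B\in\class{FI}$ and $\class{FI}$ is extension-closed, $A\in\class{FI}$ as well, whence $A\in\class{FP}\cap\class{FI}$ and the sequence is admissible in $\class{FI}$. This gives enough projectives and identifies the pair with the canonical projective cotorsion pair, while (2) is dually the canonical injective cotorsion pair.
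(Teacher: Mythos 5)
Your proof is correct and fills in exactly the verification the paper leaves to the reader, using the hereditary property of $(\class{FP},\class{FI})$ in precisely the place the paper's one-line hint indicates (to see that $I(E)/E$ is again FP-injective). No discrepancies with the paper's approach.
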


\begin{proof}
The easy verification is left to the reader. Note that the hereditary property of $(\class{FP}, \class{FI})$  implies that the cokernel of any monomorphism $A \xrightarrow{} I$ with $A \in \class{FI}$ and $I$ injective has cokernel $I/A \in \class{FI}$.
\end{proof}

\begin{definition}\label{definition-admissible-generators}
Let $\cat{A}$ be an abelian category and $\cat{B}\subseteq \cat{A}$ a full subcategory closed under extensions and direct summands. Considering $\cat{B}$  along with its naturally inherited Quillen exact structure, by a set  
 of \textbf{admissible generators} for $\cat{B}$ we mean a set of objects $\class{U}\subseteq \class{B}$ satisfying the following condition: Any $\cat{B}$-morphism $f \colon A \xrightarrow{} B$ is necessarily  a $\cat{B}$-admissible epic (i.e.  it is onto  and has $\Ker{f}\in\cat{B}$) whenever  $f_* \colon  \Hom_{\cat{A}}(U,A) \xrightarrow{} \Hom_{\cat{A}}(U,B)$ is an epimorphism of abelian groups for all $U \in \class{U}$.
\end{definition}

Assuming $\cat{A}$ is cocomplete and $\cat{B}$ is closed under coproducts,  $\class{U}$  is a set of admissible generators for $\cat{B}$  if and only if each object $B\in\cat{B}$ is the target of a $\cat{B}$-admissible epic whose domain is  a coproduct of elements of $\class{U}$; see~\cite[Prop.~9.27]{gillespie-book}.

\begin{lemma}\label{lemma-FP-injective-generators}
Let $\kappa \geq |R|$ be an infinite cardinal.
Let $\class{U}$ denote a set of isomorphism representatives for the class of all FP-injective modules $A$ having cardinality $|A|\leq \kappa$. Then $\class{U}$ is a set of admissible generators for $\class{FI}$, the fully exact subcategory of all FP-injective modules.
\end{lemma}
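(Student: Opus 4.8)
The plan is to apply the coproduct criterion for admissible generators recorded just after Definition~\ref{definition-admissible-generators}: since $R\text{-Mod}$ is cocomplete and $\class{FI}$ is closed under coproducts, by~\cite[Prop.~9.27]{gillespie-book} it suffices to produce, for each $B \in \class{FI}$, an $\class{FI}$-admissible epic $\bigoplus_{i} U_i \to B$ whose domain is a coproduct of members of $\class{U}$. Recall that the exact structure on $\class{FI}$ is inherited from $R\text{-Mod}$, so its conflations are exactly the short exact sequences of modules all three of whose terms are FP-injective; thus an $\class{FI}$-admissible epic is a surjection $q\colon M \to B$ with $M, B, \Ker q \in \class{FI}$. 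Before starting I would record two closure properties that hold because $R$ is left coherent: first, a coproduct of FP-injectives is again FP-injective (a finitely presented $F$ admits a resolution by finitely generated projectives, so $\Ext^1_R(F,-)$ commutes with coproducts and the vanishing is inherited); second, a \emph{pure} submodule $A$ of an FP-injective module $M$ is itself FP-injective. The second follows from the long exact sequence obtained by applying $\Hom_R(F,-)$, for $F$ finitely presented, to $0 \to A \to M \to M/A \to 0$: purity makes $\Hom_R(F,M) \to \Hom_R(F,M/A)$ onto, so the connecting map dies and $\Ext^1_R(F,A)$ embeds into $\Ext^1_R(F,M) = 0$.

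The key external input is the standard Löwenheim--Skolem principle of purity theory: for any module $B$ and any subset $X \subseteq B$ with $|X| \le \kappa$, there is a pure submodule $A \subseteq B$ with $X \subseteq A$ and $|A| \le \kappa$. One builds $A$ by iterating $\omega$ times the operation of adjoining, to a $\le\kappa$-sized subset, solutions in $B$ to all finite $R$-linear systems with parameters in that subset that happen to be solvable in $B$; since there are at most $\kappa$ such systems at each stage and each contributes at most $\kappa$ new elements, the union stays of size $\le \kappa$ and is pure by construction. Applying this inside an FP-injective $B$ and invoking the previous paragraph, every such $A$ lies in $\class{FI}$ and, being of size $\le\kappa$, is isomorphic to a member of $\class{U}$.

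With these in hand I would finish as follows. Fix $B \in \class{FI}$ and let $\mathcal{A}$ be the family of all pure submodules of $B$ of cardinality $\le \kappa$; each $A \in \mathcal{A}$ is isomorphic to an element of $\class{U}$, and $\bigcup_{A \in \mathcal{A}} A = B$ since every singleton sits in some member of $\mathcal{A}$. Consider the canonical map $\phi \colon \bigoplus_{A \in \mathcal{A}} A \to B$ assembled from the inclusions. It is onto because $\mathcal{A}$ covers $B$, and I claim the sequence $0 \to \Ker\phi \to \bigoplus_{A \in \mathcal{A}} A \xrightarrow{\phi} B \to 0$ is pure: given finitely presented $F$ and $g \colon F \to B$, the image $g(F)$ is finitely generated and hence of size $\le \kappa$, so it lies in some $A_0 \in \mathcal{A}$; factoring $g$ through $A_0$ and then through the corresponding coproduct summand lifts $g$ along $\phi$, which is exactly the surjectivity of $\Hom_R(F,-)$ onto $\Hom_R(F,B)$ characterizing pure exactness. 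Since $\bigoplus_{A \in \mathcal{A}} A$ is FP-injective, $\Ker \phi$ is a pure submodule of an FP-injective module and is therefore FP-injective. Thus $\phi$ is an $\class{FI}$-admissible epic with domain a coproduct of elements of $\class{U}$, and the lemma follows from~\cite[Prop.~9.27]{gillespie-book}.

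The only genuine obstacle is the Löwenheim--Skolem step, i.e.\ controlling the cardinality of the purifying submodule; everything else is bookkeeping with the $\Hom_R(F,-)$-exactness criterion for purity and with the two coherence-driven closure properties. It is precisely at this step that the hypothesis $\kappa \ge |R|$ enters, bounding the number of systems and the number of solutions adjoined at each stage.
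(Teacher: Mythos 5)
Your proof is correct, but it is organized around the other side of the equivalence stated after Definition~\ref{definition-admissible-generators}: you invoke \cite[Prop.~9.27]{gillespie-book} and exhibit, for each $B\in\class{FI}$, an explicit $\class{FI}$-admissible epic $\bigoplus_{A\in\mathcal{A}}A\to B$ from the coproduct of all pure submodules of size $\le\kappa$. The paper instead verifies the definition directly: it takes an arbitrary $f\colon A\to B$ in $\class{FI}$ with $f_*\colon\Hom_R(U,A)\to\Hom_R(U,B)$ surjective for all $U\in\class{U}$, and shows $f$ is a pure epimorphism by factoring any map $F\to B$ from a finitely presented module through a pure submodule $P\subseteq B$ with $|P|\le\kappa$. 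Both arguments rest on exactly the same two ingredients --- the L\"owenheim--Skolem--type fact about purity (the paper cites \cite[Lemma~5.3.12]{enochs-jenda-book}, which you re-sketch) and the fact that pure submodules of FP-injectives are FP-injective --- so the difference is one of packaging rather than substance. Your route additionally needs closure of $\class{FI}$ under coproducts, which you correctly supply (and which in fact holds over any ring, since $\Ext^1_R(F,-)$ commutes with direct sums for $F$ finitely presented); the paper's route avoids this by never forming the coproduct, which is a marginal economy. Both are complete proofs.
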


\begin{proof}
Let $f \colon A \xrightarrow{} B$ be a morphism of FP-injective modules with the property that $f_* \colon  \Hom_R(U,A) \xrightarrow{} \Hom_R(U,B)$ is an epimorphism for all $U \in \class{U}$. We will show that $f$ must be a pure epimorphism, whence $K := \Ker{f}$ is also FP-injective, proving $f$ is an admissible epic in $\class{FI}$.
Now to show $f$ is a pure epimorphism means to show that $f_* \colon  \Hom_R(F,A) \xrightarrow{} \Hom_R(F,B)$ is an epimorphism for all finitely presented modules $F$. We note that $|F| \leq \kappa$ for any such $F$. Then using a well-known but nontrivial set theoretic fact about purity, see~\cite[Lemma~5.3.12]{enochs-jenda-book},  we have the following property: Given any homomorphism $\theta \colon F \xrightarrow{} B$, there exists a pure $P\subseteq B$ with $|P|\leq\kappa$ and $\im{\theta} \subseteq P$. Since $P \cong U \in \class{U}$, it follows from the hypothesis that the inclusion $P \subseteq B$ lifts over $f$. Hence there is a homomorphism $t \colon P \xrightarrow{} A$ such that $ft = \theta$.
\end{proof}

\begin{lemma}\label{lemma-FP-injective-cogenerators}
For each ideal $I\subseteq R$, choose a short exact sequence in $R$-Mod
$$0 \xrightarrow{}  R/I \xrightarrow{}  A_I \xrightarrow{}  C_I \xrightarrow{}  0$$
where $A_I \in \class{FI}$ and $C_I\in\class{FP}$.
Then the canonical injective cotorsion pair $(\class{FI},  \class{I}nj)$ in the exact category $\class{FI}$ (see Lemma~\ref{lemma-restriction-cot-pairs}) is cogenerated by the set $\{A_I\}_{I\subseteq R}$.
Therefore, taking $\class{U}$ as in Lemma~\ref{lemma-FP-injective-generators}, the set $\class{S} := \class{U}\cup \{A_I\}_{I\subseteq R}$  cogenerates $(\class{FI},  \class{I}nj)$ and also contains  a set of admissible  generators for the exact category $\class{FI}$.
\end{lemma}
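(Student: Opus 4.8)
The plan is to reduce the cogeneration statement to a single injectivity criterion. Since $\class{FI}$ is closed under extensions in $R$-Mod, the Yoneda $\Ext^1$ computed in the exact category $\class{FI}$ agrees with $\Ext^1_R(-,-)$ on objects of $\class{FI}$, and by Lemma~\ref{lemma-restriction-cot-pairs} the right class of the injective cotorsion pair is exactly the class $\class{I}nj$ of genuine injective modules (which lie in $\class{FI}$, since injectives are in particular FP-injective). Thus asserting that $\{A_I\}_{I\subseteq R}$ cogenerates $(\class{FI},\class{I}nj)$ amounts to proving, for every $E\in\class{FI}$, the equivalence
$$E \in \class{I}nj \iff \Ext^1_R(A_I,E)=0 \text{ for all ideals } I\subseteq R.$$
The forward implication is immediate, as injective modules admit no nonzero $\Ext^1$ into them, so the whole content lies in the reverse implication.

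For the reverse implication I would fix $E\in\class{FI}$ with $\Ext^1_R(A_I,E)=0$ for all $I$, and try to conclude that $E$ is injective via Baer's criterion, i.e. by showing $\Ext^1_R(R/I,E)=0$ for \emph{every} left ideal $I$. The point to keep in mind is that membership of $E$ in $\class{FI}$ only gives this vanishing for \emph{finitely generated} $I$ (where $R/I$ is finitely presented, using coherence), so the hypothesis on the $A_I$ is precisely the strengthening needed to reach arbitrary ideals. The bridge is the chosen short exact sequence $0\to R/I\to A_I\to C_I\to 0$: applying $\Hom_R(-,E)$ produces a long exact sequence in which $\Ext^1_R(R/I,E)$ sits between $\Ext^1_R(A_I,E)$ and $\Ext^2_R(C_I,E)$. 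The first term vanishes by hypothesis, and the second vanishes because $(\class{FP},\class{FI})$ is a \emph{hereditary} cotorsion pair, so $\Ext^{\geq 1}_R(C_I,E)=0$ for $C_I\in\class{FP}$ and $E\in\class{FI}$. Hence $\Ext^1_R(R/I,E)=0$, and Baer's criterion yields that $E$ is injective.

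Granting that $\{A_I\}$ cogenerates $(\class{FI},\class{I}nj)$, the two remaining assertions are formal. Adjoining $\class{U}$ to the cogenerating set leaves the right-hand orthogonal unchanged, since $(\class{U}\cup\{A_I\})^{\perp}=\class{U}^{\perp}\cap\{A_I\}^{\perp}=\class{U}^{\perp}\cap\class{I}nj=\class{I}nj$, the last equality holding because every injective module lies in $\class{U}^{\perp}$. Therefore $\class{S}=\class{U}\cup\{A_I\}$ cogenerates the same pair. Finally, $\class{S}$ contains $\class{U}$, which is already a set of admissible generators for the exact category $\class{FI}$ by Lemma~\ref{lemma-FP-injective-generators}, so $\class{S}$ contains such a set.

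The one genuinely non-formal input is the vanishing of $\Ext^2_R(C_I,E)$, which is exactly where the hereditary property of $(\class{FP},\class{FI})$ enters; everything else is bookkeeping with the long exact sequence, Baer's criterion, and the identification of $\Ext^1$ in the exact category $\class{FI}$ with $\Ext^1_R$. The main care needed is to invoke heredity in the correct degree and to confirm at the outset that $\class{I}nj\subseteq\class{FI}$, so that all the orthogonality computations legitimately take place inside the exact category $\class{FI}$.
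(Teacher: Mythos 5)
Your proposal is correct and follows essentially the same route as the paper: both apply $\Hom_R(-,E)$ to the chosen sequence $0\to R/I\to A_I\to C_I\to 0$, use heredity of $(\class{FP},\class{FI})$ to kill $\Ext^1_R(C_I,E)$ and $\Ext^2_R(C_I,E)$, identify $\Ext^1_{\class{FI}}$ with $\Ext^1_R$ on FP-injectives, and conclude via Baer's criterion. The paper packages the two implications as a single isomorphism $\Ext^1_R(R/I,E)\cong\Ext^1_R(A_I,E)$, but this is only a cosmetic difference.
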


\begin{proof}
For any $A\in\class{FI}$,  by applying  $\Ext^i_R(-,A)$, we obtain a short exact sequence
$$0 = \Ext^1_R(C_I,A) \xrightarrow{}  \Ext^1_R(A_I,A) \xrightarrow{}  \Ext^1_R(R/I,A) \xrightarrow{}  \Ext^2_R(C_I,A) = 0$$
and hence an  isomorphism $\Ext^1_R(R/I,A) \cong \Ext^1_R(A_I,A)$ for each ideal $I\subseteq R$.  Note too that since $A_I, A  \in \class{FI}$, the Ext group  $\Ext^1_R(A_I,A)$ coincides with  $\Ext^1_{\class{FI}}(A_I,A)$, the Yoneda Ext group relative to the exact structure inherited by $\class{FI}$. Thus $A\in\class{FI}$ is in ${\{A_I\}}^{\perp_{\class{FI}}}$ if and only if it is in ${\{R/I\}}^{\perp}$.
By Baer's criterion, this is the case if and only if $A$ is injective. This proves that $\{A_I\}_{I\subseteq R}$ is a cogenerating set for the canonical injective cotorsion pair $(\class{FI},  \class{I}nj)$ of Lemma~\ref{lemma-restriction-cot-pairs}(2).
\end{proof}

We let $\dwclass{I}$ denote the class of all complexes of injective modules (degree-wise injective complexes). We let $\class{W}_{\textnormal{co}}$ denote the class of all complexes $W$ such that any chain map $W \xrightarrow{} I$ is null homotopic whenever $I \in \dwclass{I}$. Following Positselski, we call such complexes   \textbf{coacyclic} (in the sense of Becker~\cite{becker}). All contractible complexes are coacyclic, and all coacyclic complexes are exact.

\begin{proposition}\label{prop-coacyclic-fp-injectives}
Let $R$ be a left coherent ring and $\class{S} := \class{U}\cup \{A_I\}_{I\subseteq R}$ be the cogenerating set for $(\class{FI},  \class{I}nj)$, from Lemma~\ref{lemma-FP-injective-cogenerators}. Then the set
$$D^n(\class{S}) :=\{\, D^n(S) \,|\,  S \in \class{S}, n\in\Z   \,\}$$
is a set of admissible generators for $\cha{FI}$, the exact category of all chain complexes of FP-injective modules along with its naturally inherited exact structure. Letting $\class{I} := \class{I}nj$ denote the class of injective modules, $D^n(\class{S})$ cogenerates a complete cotorsion pair $(\leftperp{\dwclass{I}}, \dwclass{I})$, in $\cha{FI}$. Moreover, we have
$$(\leftperp{\dwclass{I}}, \dwclass{I}) = (\class{W}_{\textnormal{co}} \cap \dwclass{FI}, \dwclass{I}).$$
That is, $(\leftperp{\dwclass{I}}, \dwclass{I})$ coincides with the restriction of the cotorsion pair $(\class{W}_{\textnormal{co}}, \dwclass{I})$ in $\ch$, to the fully exact subcategory $\cha{FI}$.
It follows that each complex of $\class{W}_{\textnormal{co}} \cap \dwclass{FI}$ is a direct summand of a transfinite extension of complexes in $D^n(\class{S})$.
\end{proposition}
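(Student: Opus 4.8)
The plan is to transport the module-level structure recorded in Lemmas~\ref{lemma-restriction-cot-pairs}--\ref{lemma-FP-injective-cogenerators} up to chain complexes by means of the disk complexes, and then to invoke the small object argument in the exact category $\cha{FI}$. The single tool doing all the transporting is the adjunction isomorphism $\Hom_{\ch}(D^n(M), X) \cong \Hom_R(M, X_n)$ together with its consequence $\Ext^1_{\cha{FI}}(D^n(S), X) \cong \Ext^1_{\class{FI}}(S, X_n)$, valid for $S \in \class{S}$ and $X \in \cha{FI}$ because evaluation at degree $n$ is exact and $D^n$ is exact. I would prove the four assertions of the proposition in turn: that $D^n(\class{S})$ generates, that it cogenerates a complete cotorsion pair whose right class is exactly $\dwclass{I}$, that the left class is $\class{W}_{\textnormal{co}}\cap\dwclass{FI}$, and finally the summand-of-filtration statement.

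For the generation claim I would use the characterization of admissible generators following Definition~\ref{definition-admissible-generators}: since $\class{U}\subseteq\class{S}$ is a set of admissible generators for $\class{FI}$ (Lemma~\ref{lemma-FP-injective-cogenerators}), each $X_n$ receives an admissible epic in $\class{FI}$ from a coproduct of objects of $\class{U}$; the disk adjunction then assembles these degreewise epics into an admissible epic $\coprod D^n(U)\twoheadrightarrow X$ in $\cha{FI}$, which is precisely the generation property. For the cotorsion pair, the exact-category form of the small object argument (as developed in~\cite{gillespie-book}) produces a complete cotorsion pair cogenerated by $D^n(\class{S})$; its right class consists of those $X\in\cha{FI}$ with $\Ext^1_{\class{FI}}(S, X_n)=0$ for all $S\in\class{S}$ and all $n$, i.e. with every $X_n$ in $\class{S}^{\perp_{\class{FI}}}$. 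By Lemma~\ref{lemma-FP-injective-cogenerators} this orthogonal is the class of injective objects of $\class{FI}$, which by Lemma~\ref{lemma-restriction-cot-pairs} is the class of genuine injectives; hence the right class is precisely $\dwclass{I}$.

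To identify the left class I would compare Ext groups across the inclusion $\cha{FI}\hookrightarrow\ch$. For $X\in\cha{FI}$ and $I\in\dwclass{I}$, any extension $0\xrightarrow{}I\xrightarrow{}E\xrightarrow{}X\xrightarrow{}0$ in $\ch$ is degreewise an extension of an FP-injective by an injective, so $E$ is FP-injective in each degree (as $\class{FI}$ is closed under extensions) and the extension already lives in $\cha{FI}$; thus $\Ext^1_{\cha{FI}}(X,I)\cong\Ext^1_{\ch}(X,I)$. Consequently $\leftperp{\dwclass{I}}$ computed in $\cha{FI}$ equals $\cha{FI}$ intersected with $\leftperp{\dwclass{I}}$ computed in $\ch$, which is $\dwclass{FI}\cap\class{W}_{\textnormal{co}}$ once one knows $(\class{W}_{\textnormal{co}},\dwclass{I})$ is a cotorsion pair in $\ch$; the latter rests on the fact that, for $I$ degreewise injective, abelian extensions by $I$ are automatically degreewise split and are therefore classified by a homotopy-classes group, so that $\Ext^1_{\ch}$-orthogonality to $\dwclass{I}$ coincides with the null-homotopy condition defining $\class{W}_{\textnormal{co}}$.

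Finally, the summand-of-filtration statement follows formally. Given $X\in\class{W}_{\textnormal{co}}\cap\dwclass{FI}=\leftperp{\dwclass{I}}$, the small object argument relative to $D^n(\class{S})$ furnishes an admissible short exact sequence $0\xrightarrow{}Y\xrightarrow{}T\xrightarrow{}X\xrightarrow{}0$ in $\cha{FI}$ with $Y\in\dwclass{I}$ and $T$ a transfinite extension of complexes in $D^n(\class{S})$ (here the generation property is what makes $T\xrightarrow{}X$ an admissible epic, while closure of $\class{FI}$ under direct limits keeps $T$ in $\cha{FI}$). Since $X\in\leftperp{\dwclass{I}}$ and $Y\in\dwclass{I}$, the group $\Ext^1_{\cha{FI}}(X,Y)$ vanishes, the sequence splits, and $X$ is exhibited as a direct summand of $T$. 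The genuine obstacle, in my estimation, is the third step: reconciling the intrinsic $\Ext$-orthogonal $\leftperp{\dwclass{I}}$ inside $\cha{FI}$ with the extrinsically defined coacyclic class $\class{W}_{\textnormal{co}}$, which requires both the Ext-comparison across the inclusion and the homotopy-theoretic description of the ambient cotorsion pair $(\class{W}_{\textnormal{co}},\dwclass{I})$.
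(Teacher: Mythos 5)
Your proposal is correct and follows essentially the same route as the paper: the disk-complex adjunction $\Hom_{\ch}(D^n(M),X)\cong\Hom_R(M,X_n)$ transports the admissible generators and the cogenerating set of Lemmas~\ref{lemma-FP-injective-generators} and~\ref{lemma-FP-injective-cogenerators} up to $\cha{FI}$, the exact-category small object argument (for which the paper cites Saor\'{\i}n--\v{S}\v{t}ov\'{\i}\v{c}ek and \cite[Theorem~9.34(2)]{gillespie-book}) yields the complete cotorsion pair together with the summand-of-transfinite-extension description, and the left class is then matched with $\class{W}_{\textnormal{co}}\cap\dwclass{FI}$. Your degreewise $\Ext$-comparison across the inclusion $\cha{FI}\hookrightarrow\ch$ is exactly the ``easy direct check'' the paper leaves to the reader when it asserts that $(\class{W}_{\textnormal{co}},\dwclass{I})$ restricts to a cotorsion pair on $\cha{FI}$, so the two arguments differ only in how much of the routine verification is written out.
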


\begin{proof}
The set $D^n(\class{U}) :=\{\, D^n(U) \,|\,  U \in \class{U}, n\in\Z   \,\}$ is a set of admissible generators for the exact category $\cha{FI}$.  Indeed this follows from Definition~\ref{definition-admissible-generators} and Lemma~\ref{lemma-FP-injective-generators} along with the natural isomorphism $$\Hom_{\ch}(D^n(U),X)\cong \Hom_R(U,X_n).$$

Therefore, the set $D^n(\class{S})$ cogenerates a (functorially) complete cotorsion pair, $(\leftperp{\dwclass{I}}, \dwclass{I})$, relative to the exact category $\cha{FI}$. Moreover,  $\leftperp{\dwclass{I}}$ equals the class of all direct summands of transfinite extensions of complexes in $D^n(\class{S})$; here we are using~\cite[Corollary~2.15(2)]{saorin-stovicek}, or see~\cite[Theorem~9.34(2)]{gillespie-book}.
 
We are claiming that $\leftperp{\dwclass{I}}$, which again denotes the left Ext orthogonal in $\cha{FI}$, coincides with $\class{W}_{\textnormal{co}} \cap\, \dwclass{FI}$, the class of coacyclic complexes of FP-injective modules. To see this, one easily checks directly that the (complete) cotorsion pair $(\class{W}_{\textnormal{co}}, \dwclass{I})$ in $\ch$, restricts to a (complete)  cotorsion pair $(\class{W}_{\textnormal{co}} \cap \dwclass{FI}, \dwclass{I})$ in $\cha{FI}$.  So it must be the case that  $\leftperp{\dwclass{I}}=\class{W}_{\textnormal{co}} \cap \dwclass{FI}$.
\end{proof}

The following is the special case we need of Stovicek's result~\cite[Prop.~6.11]{stovicek-purity}.

\begin{corollary}\label{cor-coacyclic-fp-injectives}
Let $R$ be a left coherent ring.
Let $\dwclass{FI}$ be the class of all degreewise FP-injective complexes, and let $\tilclass{FI}$ be the class of all exact complexes whose cycle modules are each FP-injective.  Then $\tilclass{FI} =  \class{W}_{\textnormal{co}}\cap \dwclass{FI}$ where $\class{W}_{\textnormal{co}}$ is the class of all coacylic complexes.
\end{corollary}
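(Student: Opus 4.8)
The plan is to prove the two set-inclusions separately, exploiting that Proposition~\ref{prop-coacyclic-fp-injectives} has already identified $\class{W}_{\textnormal{co}} \cap \dwclass{FI}$ with $\leftperp{\dwclass{I}}$ and, crucially, described it as the class of all direct summands of transfinite extensions of complexes in $D^n(\class{S})$. Two elementary observations will be used throughout. First, each disk complex $D^n(S)$ with $S \in \class{S}\subseteq\class{FI}$ is exact with its single nonzero cycle equal to $S$, so $D^n(\class{S}) \subseteq \tilclass{FI}$ and in fact $D^n(\class{S})\subseteq \leftperp{\dwclass{I}}$ since $D^n(\class{S})$ cogenerates that cotorsion pair. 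Second, any $X \in \tilclass{FI}$ lies in $\dwclass{FI}$, because each term sits in a short exact sequence $0 \to Z_nX \to X_n \to Z_{n-1}X \to 0$ of FP-injective cycles and $\class{FI}$ is closed under extensions.

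For the inclusion $\class{W}_{\textnormal{co}} \cap \dwclass{FI} \subseteq \tilclass{FI}$, I would note that $\tilclass{FI}$ is closed under transfinite extensions (Eklof's lemma) and under direct summands; this is immediate once one recognizes $\tilclass{FI}$ as the left half of the cotorsion pair induced on $\cha{FI}$ by the hereditary pair $(\class{FI}, \class{I}nj)$ of Lemma~\ref{lemma-restriction-cot-pairs}, cf.~\cite{gillespie-book}. Since $\tilclass{FI}$ contains $D^n(\class{S})$, it therefore contains every direct summand of a transfinite extension of complexes in $D^n(\class{S})$, which by Proposition~\ref{prop-coacyclic-fp-injectives} is precisely $\class{W}_{\textnormal{co}} \cap \dwclass{FI}$. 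I would emphasize that this easy direction already yields the a priori nonobvious fact that a coacyclic, degreewise-FP-injective complex has FP-injective cycles, even though $\class{FI}$ is \emph{not} closed under kernels of epimorphisms.

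The reverse inclusion $\tilclass{FI} \subseteq \class{W}_{\textnormal{co}} \cap \dwclass{FI}$ is the substantive one. By the second observation above it suffices to show $\tilclass{FI} \subseteq \class{W}_{\textnormal{co}} = \leftperp{\dwclass{I}}$, and for this I would show that each $X \in \tilclass{FI}$ is \emph{itself} a transfinite extension of complexes in $D^n(\class{S})$; Eklof's lemma then places $X$ in $\leftperp{\dwclass{I}}$, as $D^n(\class{S}) \subseteq \leftperp{\dwclass{I}}$. Constructing this filtration proceeds in two stages: first assemble $X$ from disk complexes on its cycles, the point being that the cycle sequences $0 \to Z_nX \to X_n \to Z_{n-1}X \to 0$ exhibit a continuous chain of subcomplexes whose successive quotients are the disk complexes $D^n(Z_{n-1}X)$ (one checks this directly for bounded $X$, e.g. a three-term exact complex is an extension of two disks); and second, refine each $D^n(Z_{n-1}X)$ using that the FP-injective module $Z_{n-1}X$ is filtered by the small FP-injectives of $\class{U}$ — the module-level counterpart of Lemma~\ref{lemma-FP-injective-generators}, which is where the set-theoretic input via the cardinal $\kappa$ re-enters — so that $X$ becomes filtered by the $D^n(U)$ with $U \in \class{U}\subseteq\class{S}$.

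I expect the main obstacle to be exactly this construction of a $D^n(\class{S})$-filtration for an \emph{unbounded} $X$: carrying out the two stages simultaneously and continuously requires Hill-lemma style bookkeeping to reconcile the degreewise cycle filtrations across all degrees at once and to cope with the absence of a bottom degree. As a sanity check it is worth recording the equivalent reformulation: the reverse inclusion asserts that every complex of injectives is DG-injective relative to the exact category $\class{FI}$, i.e. that for $\class{FI}$ the derived and coderived theories coincide. This is the genuine content of Stovicek's result and explains why the statement is nontrivial; any attempt to prove the reverse inclusion through the completeness of $(\leftperp{\dwclass{I}}, \dwclass{I})$ alone turns out to be circular, which is why the filtration route is the one I would pursue.
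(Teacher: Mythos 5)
Your proposal has the difficulty of the two inclusions reversed, and the direction you flag as ``the substantive one'' is left with a genuine gap. The containment $\tilclass{FI} \subseteq \class{W}_{\textnormal{co}}\cap\dwclass{FI}$ does not require exhibiting each $X\in\tilclass{FI}$ as a transfinite extension of complexes in $D^n(\class{S})$; it is a one-line purity argument. Each cycle sequence $0\to Z_nX\to X_n\to Z_{n-1}X\to 0$ has FP-injective kernel, hence is pure, so every $X\in\tilclass{FI}$ is pure exact; and every pure exact complex lies in $\class{W}_{\textnormal{co}}$ because injective modules are pure-injective, so $\Hom_R(X,J)$ is exact for each injective $J$ and the usual inductive construction of a contracting homotopy shows any map $X\to I$ with $I\in\dwclass{I}$ is null homotopic. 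By contrast, the filtration you propose is both stronger than needed and not established: your first stage (filtering $X$ by truncations with disk quotients $D^n(Z_{n-1}X)$) works for bounded, or bounded-above, complexes, but for $X$ unbounded above the truncations $\cdots\subseteq\tau_{\geq n}X\subseteq\tau_{\geq n-1}X\subseteq\cdots$ form a chain with trivial intersection rather than a well-ordered filtration starting at $0$, and the complementary sequences $0\to Z_{n}X\to X_n\to Z_{n-1}X\to 0$ need not split, so there is no evident disk \emph{sub}complex to start from. You correctly anticipate that Hill-lemma bookkeeping would be needed, but you do not supply it, and Proposition~\ref{prop-coacyclic-fp-injectives} only delivers ``direct summand of a transfinite extension,'' not the on-the-nose filtration your argument requires. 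So this half of your proof is incomplete, and moreover unnecessary.

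For the other containment, $\class{W}_{\textnormal{co}}\cap\dwclass{FI}\subseteq\tilclass{FI}$, your strategy coincides with the paper's: use the last statement of Proposition~\ref{prop-coacyclic-fp-injectives} and the closure of $\tilclass{FI}$ under transfinite extensions and direct summands. But your justification of that closure -- Eklof's lemma applied to ``$\tilclass{FI}$ as the left half of the cotorsion pair induced on $\cha{FI}$'' -- is circular as stated: identifying $\tilclass{FI}$ with $\leftperp{\dwclass{I}}$ in $\cha{FI}$ is precisely the content of the corollary, and in $\ch$ the class $\tilclass{FI}$ sits on the \emph{right} of $(\dgclass{FP},\tilclass{FI})$, so Eklof does not apply directly. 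The closure should instead be argued concretely, as the paper does: a transfinite extension of complexes in $D^n(\class{S})$, taken in the exact category $\cha{FI}$, is in each degree a transfinite extension along pure monomorphisms of FP-injective modules, so its components and cycles remain FP-injective and it remains pure exact, using that the class of FP-injective modules over a left coherent ring is closed under extensions, direct limits and pure submodules.
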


\begin{proof}
We have $(\subseteq)$ because each complex of $\tilclass{FI}$ is pure exact and $\class{W}_{\textnormal{co}}$ contains all pure exact complexes.
The reverse containment  $(\supseteq)$ follows from the last statement of Proposition~\ref{prop-coacyclic-fp-injectives} and the fact that the class of FP-injective modules is closed under extensions, direct limits and pure submodules, by~\cite{stenstrom-fp}. Indeed each complex of $\class{W}_{\textnormal{co}} \cap \dwclass{FI}$ is a direct summand of a transfinite extension of complexes in $D^n(\class{S})$. But any such  transfinite extension is necessarily a degreewise pure extension of pure exact complexes with FP-injective components, and so is itself a pure exact complex with FP-injective components, hence it is in $\tilclass{FI}$. Moreover $\tilclass{FI}$ is closed under direct summands. Therefore, $\class{W}_{\textnormal{co}} \cap \dwclass{FI} \subseteq \tilclass{FI}$.
\end{proof}

\subsection{Stovicek's FP-injective model structure for the coderived category}

By~\cite{gillespie}, the cotorsion pair $(\class{FP}, \class{FI})$ lifts to two hereditary cotorsion pairs on $\ch$, one of which gets denoted $(\dgclass{FP}, \tilclass{FI})$, where $\tilclass{FI}$ is the class of all exact complexes $Y$  with cycles $Z_nY\in\class{FI}$. However, it was shown in~\cite[Corollary~4.8]{bazzoni-hrbek-positselski-fp-projective-periodicity} that every chain map $f \colon X \xrightarrow{} Y$ is null homotopic whenever $Y\in\tilclass{FI}$ and each $X_n\in\class{FP}$. In other words, $\dgclass{FP}=\dwclass{FP}$, where $\dwclass{FP}$ denotes the class of all complexes of FP-projective modules.
This is not just true for modules over a left coherent ring, but over any ring $R$. For the reader's convenience we include a proof of this result.

\begin{proposition}\cite[Corollary~4.8]{bazzoni-hrbek-positselski-fp-projective-periodicity}\label{prop-complexes-fp-projectives}
Let $R$ be any ring. Then any chain map $f \colon X \xrightarrow{} Y$ is null homotopic whenever $Y\in\tilclass{FI}$ and each $X_n\in\class{FP}$. In other words, $\dgclass{FP}=\dwclass{FP}$.
\end{proposition}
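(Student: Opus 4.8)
The plan is to produce, for an arbitrary chain map $f\colon X\to Y$ with every $X_n\in\class{FP}$ and $Y\in\tilclass{FI}$, an explicit contracting homotopy. The asserted equality $\dgclass{FP}=\dwclass{FP}$ is the reformulation recorded in the statement, so it suffices to establish this null-homotopy claim. First I would assemble the $\Ext$-vanishing toolkit. Since $Y$ is exact, $\im d^Y_{n+1}=\Ker d^Y_n=Z_nY$, and there are short exact sequences $0\to Z_{n+1}Y\to Y_{n+1}\to Z_nY\to 0$. By hypothesis each cycle module $Z_mY$ is FP-injective, and since $\class{FI}$ is closed under extensions each $Y_m$ is FP-injective as well. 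As $X_n\in\class{FP}$ and $(\class{FP},\class{FI})$ is a cotorsion pair, $\Ext^1_R(X_n,Z_{n+1}Y)=0$; applying $\Hom_R(X_n,-)$ to the displayed sequence therefore shows $\Hom_R(X_n,Y_{n+1})\to\Hom_R(X_n,Z_nY)$ is surjective. This surjectivity is the key engine: any map $X_n\to Z_nY$ lifts along $d^Y_{n+1}$.

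Second, I would build the homotopy degreewise. Seeking $s_n\colon X_n\to Y_{n+1}$ with $f_n=d^Y_{n+1}s_n+s_{n-1}d^X_n$, argue inductively: given $s_{n-1}$ for which the relation holds in degree $n-1$, set $g_n:=f_n-s_{n-1}d^X_n$. A direct computation using the degree $n-1$ relation together with $d^Xd^X=0$ gives $d^Y_n g_n=0$, so $g_n$ factors through $Z_nY$, and the surjectivity above produces $s_n$ with $d^Y_{n+1}s_n=g_n$. When $X$ is bounded below this settles everything: take $s_n=0$ for $n\ll 0$ as the base case and run the induction upward.

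Third, and this is where the real difficulty lies, for $X$ unbounded below the induction has no base case, and the obstruction is genuine. Indeed $\homcomplex(X,Y)=\mathrm{Tot}^\Pi\bigl(\Hom_R(X_k,Y_l)\bigr)$ has exact columns, since the $\Ext$-vanishing forces each $\Hom_R(X_k,Y_\bullet)$ to be exact; but for a full-plane double complex, exactness of the columns does not force exactness of the product-total complex, so $H_0\homcomplex(X,Y)$ need not vanish for formal (acyclic-assembly) reasons. Closing this gap is exactly the content of FP-injective periodicity, and I expect it to be the main obstacle.

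To close it I would invoke the periodicity theorem behind~\cite{bazzoni-hrbek-positselski-fp-projective-periodicity}: running the lift over the infinite descending tail yields a cycle module obtained as a periodic extension built from FP-injectives, and periodicity forces this module back into $\class{FI}$, supplying the global lift that the bounded argument was missing. An alternative packaging, internal to this paper, is to combine the last assertion of Proposition~\ref{prop-coacyclic-fp-injectives} with Corollary~\ref{cor-coacyclic-fp-injectives}: here $Y$ is a direct summand of a transfinite extension of the contractible disk complexes $D^n(S)$, and one reduces the null-homotopy of $f$ to these contractible pieces. The technical point to be overcome in that route is that $\Hom$ in the homotopy category does not commute with the transfinite extension, which is once more precisely what periodicity repairs.
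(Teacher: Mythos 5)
Your first two steps are fine and agree with what one would do: the $\Ext$-vanishing from the cotorsion pair $(\class{FP},\class{FI})$ gives surjectivity of $\Hom_R(X_n,Y_{n+1})\to\Hom_R(X_n,Z_nY)$, and the upward induction settles the case of a bounded below $X$. You are also right to flag the unbounded case as the genuine difficulty: the acyclic assembly argument for $\mathrm{Tot}^\Pi$ fails for a full-plane $\Hom$ double complex, so something nontrivial is needed. But at that point your proposal stops being a proof. ``Invoke the periodicity theorem behind~\cite{bazzoni-hrbek-positselski-fp-projective-periodicity}'' is not an argument: you neither state the theorem nor show how it produces the missing homotopy, and the sentence about ``a cycle module obtained as a periodic extension built from FP-injectives'' does not describe a construction one could carry out. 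Your fallback route via Proposition~\ref{prop-coacyclic-fp-injectives} and Corollary~\ref{cor-coacyclic-fp-injectives} has two further defects: those results assume $R$ is left coherent, while the proposition is asserted for an arbitrary ring; and they deconstruct the \emph{target} $Y$, whereas $\Hom_{\ch}(X,-)$ and Eklof's lemma are of no help against a transfinite extension in the second variable --- which is exactly the obstruction you yourself name and then leave unrepaired.

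The idea you are missing is that one should deconstruct the \emph{source} $X$, not the target. The paper's proof runs as follows: by a version of Eilenberg's swindle one may assume each $X_n$ is an honest transfinite extension of finitely presented modules (not merely a direct summand of one); then Stovicek's Hill-lemma result \cite[Prop.~4.3]{stovicek-deconstructible}, applied with $\kappa=\aleph_0$, filters the whole complex $X$ by subcomplexes whose successive quotients are bounded above complexes of finitely presented modules; finally Eklof's lemma --- which does apply to transfinite extensions in the \emph{first} variable of $\Ext^1_{\ch}(-,Y)$ --- reduces the claim to a single bounded above complex of finitely presented modules mapped to the pure exact complex $Y$, i.e.\ essentially to the case you already handled. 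No periodicity theorem enters at all; the entire content of the unbounded case is the deconstructibility of complexes with FP-projective components. As written, your proposal establishes the bounded case and correctly locates the gap, but does not close it.
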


\begin{proof}
By a variation of Eilenberg's trick, we may assume from the start that each $X_n$ is a transfinite extension of finitely presented modules; see~\cite[Lemma~3.2]{gillespie-ac-pure-proj}. Then by~\cite[Prop.~4.3]{stovicek-deconstructible}, with $\kappa=\aleph_0$, the complex $X$ is a transfinite extension of bounded above complexes of finitely presented modules. Hence by Eklof's lemma we may even assume that each $X$ is a bounded above complex of finitely presented modules. But in this case the statement of the proposition is immediate from~\cite[Lemma~3.2]{gillespie-ac-pure-proj} because any complex $Y\in\tilclass{FI}$ is easily seen to be pure exact. 
\end{proof}

\begin{proposition}\cite{stovicek-purity}\label{prop-coderived-fp-injectives}
Let $R$ be a left coherent ring. Then the triple of classes
$$\mathfrak{M}^{FPinj}_{\textnormal{co}} = (\dwclass{FP}, \class{W}_{\textnormal{co}}, \dwclass{FI})$$
represents an hereditary abelian model structure on $\ch$ and satisfies:
\begin{enumerate}
\item Its homotopy category identifies with the coderived category of $R$, i.e.,  $$\textnormal{Ho}(\mathfrak{M}^{FPinj}_{\textnormal{co}})=\class{D}_{\textnormal{co}}(R).$$
\item The canonical functor $\gamma_{\textnormal{co}} \colon \ch \xrightarrow{} \class{D}_{\textnormal{co}}(R)$ preserves coproducts.
\item $\mathfrak{M}^{FPinj}_{\textnormal{co}}$ is  finitely generated, so $\class{D}_{\textnormal{co}}(R)$ is compactly generated.
\item The compact objects of $\class{D}_{\textnormal{co}}(R)$ are precisely the complexes that are isomorphic to a finitely presented object of $\ch$. I.e., isomorphic in the coderived category to a bounded complex of finitely presented modules.
\end{enumerate}
\end{proposition}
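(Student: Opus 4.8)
The plan is to verify that $\mathfrak{M}^{FPinj}_{\textnormal{co}}$ is a Hovey triple and then read off the four listed properties. By Hovey's theorem relating abelian model structures to cotorsion pairs, it suffices to check that $\class{W}_{\textnormal{co}}$ is thick and that the two pairs $(\dwclass{FP},\,\class{W}_{\textnormal{co}}\cap\dwclass{FI})$ and $(\dwclass{FP}\cap\class{W}_{\textnormal{co}},\,\dwclass{FI})$ are complete hereditary cotorsion pairs in $\ch$. The first pair is essentially already in hand: Corollary~\ref{cor-coacyclic-fp-injectives} identifies $\class{W}_{\textnormal{co}}\cap\dwclass{FI}$ with $\tilclass{FI}$, while Proposition~\ref{prop-complexes-fp-projectives} identifies $\dwclass{FP}$ with $\dgclass{FP}$; thus this pair is exactly the lifted cotorsion pair $(\dgclass{FP},\tilclass{FI})$ of $(\class{FP},\class{FI})$, which is complete and hereditary.

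The substantive step, which I expect to be the main obstacle, is the second pair. First I would produce the complete hereditary cotorsion pair $(\leftperp{\dwclass{FI}},\dwclass{FI})$ in $\ch$ via the Eklof--Trlifaj theorem: using $\Ext^1_{\ch}(D^n(F),I)\cong\Ext^1_R(F,I_n)$ one sees that a complex $I$ is degreewise FP-injective precisely when it is right $\Ext^1$-orthogonal to the set $\{D^n(F)\mid F \text{ finitely presented},\ n\in\Z\}$, so this set cogenerates the pair and $\dwclass{FI}$ is genuinely its right-hand class; heredity follows since $(\class{FP},\class{FI})$ is hereditary. It then remains to identify the left-hand class $\leftperp{\dwclass{FI}}$ with $\dwclass{FP}\cap\class{W}_{\textnormal{co}}$. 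The containment $\leftperp{\dwclass{FI}}\subseteq\dwclass{FP}\cap\class{W}_{\textnormal{co}}$ is immediate from $\dwclass{I}\subseteq\dwclass{FI}$ and $\tilclass{FI}\subseteq\dwclass{FI}$, together with the cotorsion pairs $(\class{W}_{\textnormal{co}},\dwclass{I})$ and $(\dwclass{FP},\tilclass{FI})$. For the reverse containment I would take $X\in\dwclass{FP}\cap\class{W}_{\textnormal{co}}$ and $I\in\dwclass{FI}$, choose a special precover $0\to J\to T\to I\to0$ with $T\in\tilclass{FI}$ and $J\in\dwclass{I}$ from the complete cotorsion pair of Proposition~\ref{prop-coacyclic-fp-injectives} in $\cha{FI}$, and dimension-shift: applying $\Hom_{\ch}(X,-)$ places $\Ext^1_{\ch}(X,I)$ between $\Ext^1_{\ch}(X,T)=0$ (as $X\in\dwclass{FP}$, $T\in\tilclass{FI}$) and $\Ext^2_{\ch}(X,J)=0$ (as $X\in\class{W}_{\textnormal{co}}$, $J\in\dwclass{I}$, using that $(\class{W}_{\textnormal{co}},\dwclass{I})$ is hereditary, since a degreewise-split quotient of injectives is injective), whence $\Ext^1_{\ch}(X,I)=0$.

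With both pairs complete and hereditary, and $\class{W}_{\textnormal{co}}$ thick (it is the kernel of $K(R)\to\class{D}_{\textnormal{co}}(R)$, so it is closed under retracts and satisfies two-out-of-three on short exact sequences), Hovey's theorem yields the hereditary abelian model structure. For (1), the homotopy category is the localization of $K(R)$ at its weak equivalences, whose class of trivial objects is $\class{W}_{\textnormal{co}}$, so by definition $\textnormal{Ho}(\mathfrak{M}^{FPinj}_{\textnormal{co}})=\class{D}_{\textnormal{co}}(R)$. For (2), $\class{W}_{\textnormal{co}}$ is closed under coproducts, because a coproduct $\coprod_\alpha W_\alpha\to I$ into a complex of injectives is null homotopic via $\Hom_{K(R)}(\coprod_\alpha W_\alpha,I)\cong\prod_\alpha\Hom_{K(R)}(W_\alpha,I)=0$; hence $\gamma_{\textnormal{co}}$ preserves coproducts. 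For (3), the cogenerating set $\{D^n(F)\}$ consists of finitely presented complexes, which makes the model structure finitely generated and $\class{D}_{\textnormal{co}}(R)$ compactly generated.

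Finally, for (4) I would run the standard compact-generation argument. The bounded complexes of finitely presented modules are compact objects of $\class{D}_{\textnormal{co}}(R)$, they contain the generators $\{S^n(F)\}$ coming from the finitely generated structure, and they are closed under shifts, cones and retracts in $\class{D}_{\textnormal{co}}(R)$; hence they form a set of compact generators, and by Neeman's theorem every compact object lies in their thick closure, which is this same class. This shows the compact objects are exactly the complexes isomorphic in the coderived category to a finitely presented complex. I expect this last part to demand the most additional bookkeeping, since one must verify both that finitely presented complexes are genuinely compact and that they exhaust the compacts.
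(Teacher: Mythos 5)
Your proposal follows the same overall architecture as the paper's proof: both cotorsion pairs are cogenerated by the sets $\{S^n(F)\}$ and $\{D^n(F)\}$ of finitely presented complexes, their left and right classes are identified using Proposition~\ref{prop-complexes-fp-projectives} and Corollary~\ref{cor-coacyclic-fp-injectives}, and finite generation / compact generation then follow because the cogenerating sets consist of finitely presented complexes. The one place you genuinely diverge is the containment $\dwclass{FP}\cap\class{W}_{\textnormal{co}}\subseteq\leftperp{\dwclass{FI}}$: the paper takes a special approximation $0\to Y\to Z\to X\to 0$ of $X$ itself, with $Z\in\leftperp{\dwclass{FI}}$ and $Y\in\dwclass{FI}$, observes that $Y\in\class{W}_{\textnormal{co}}\cap\dwclass{FI}=\tilclass{FI}$ by thickness of $\class{W}_{\textnormal{co}}$, and concludes that the sequence splits so $X$ is a retract of $Z$; you instead dimension-shift on the target $I\in\dwclass{FI}$ through $0\to J\to T\to I\to 0$ with $T\in\tilclass{FI}$ and $J\in\dwclass{I}$, using $\Ext^1_{\ch}(X,T)=0$ and $\Ext^2_{\ch}(X,J)=0$. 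Both arguments are valid and of comparable length; yours trades the splitting trick for heredity of $(\class{W}_{\textnormal{co}},\dwclass{I})$, which you correctly justify via degreewise splitness. The only point where your sketch is materially thinner than the paper's is item (4): to see that the isomorphism closure in $\class{D}_{\textnormal{co}}(R)$ of the bounded complexes of finitely presented modules is closed under cones and retracts (so that it is already thick and Neeman's theorem closes the argument), one needs the induced functor $\class{D}^b(R\textnormal{-mod})\to\class{D}_{\textnormal{co}}(R)$ to be fully faithful with idempotent-complete image; the paper obtains this from the observation that a bounded complex is coacyclic if and only if it is exact (being a finite extension of disks $D^n(M)$), which is the concrete ingredient your anticipated ``bookkeeping'' would have to supply.
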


\begin{proof}
The idea of the proof is to show that each of the two cotorsion pairs making up the Hovey triple are cogenerated by sets of finitely presented complexes.
First, consider the set $\class{S} = \{S^n(F) \,|\, n\in\Z\}$, where $F$ ranges through a set of isomorphism representatives for all the finitely presented modules. It cogenerates a complete cotorsion pair, and using Proposition~\ref{prop-complexes-fp-projectives} we see that it is exactly $(\dwclass{FP},\tilclass{FI})$.   In light of Corollary~\ref{cor-coacyclic-fp-injectives}, we conclude that $(\dwclass{FP}, \tilclass{FI})=(\dwclass{FP}, \class{W}_{\textnormal{co}}\cap \dwclass{FI})$ is cogenerated by $\class{S}$, which we note is a set of finitely presented complexes.

On the other hand, consider the  set $\class{T} = \{D^n(F) \,|\, n\in\Z\}$, where again $F$ ranges through a set of isomorphism representatives for all the finitely presented modules. It too cogenerates a complete cotorsion pair, and by~\cite[Prop.~4.4]{gillespie-degreewise-model-strucs} it is exactly $(\leftperp{\dwclass{FI}},\dwclass{FI})$, where $\dwclass{FI}$ is the class of all complexes that are degreewise FP-injective. We claim that $\leftperp{\dwclass{FI}} = \dwclass{FP}\cap\class{W}_{\textnormal{co}}$. Indeed the containment $\leftperp{\dwclass{FI}} \subseteq \dwclass{FP}\cap\class{W}_{\textnormal{co}}$ is rather immediate. On the other hand, let $X\in\dwclass{FP}\cap\class{W}_{\textnormal{co}}$. We may write a short exact sequence $0 \xrightarrow{}  Y \xrightarrow{}  Z \xrightarrow{}  X \xrightarrow{}  0$ with $Z \in\leftperp{\dwclass{FI}}$ and $Y\in \dwclass{FI}$. Since $\leftperp{\dwclass{FI}} \subseteq \class{W}_{\textnormal{co}}$, and  $\class{W}_{\textnormal{co}}$ is thick, we conclude $Y\in\class{W}_{\textnormal{co}}\cap\dwclass{FI}=\tilclass{FI}$. Now since $(\dwclass{FP}, \tilclass{FI})$ is a cotorsion pair the short exact sequence must split, and so $X$ is a direct summand of $Z$, proving $X \in\leftperp{\dwclass{FI}}$.
We conclude that $(\dwclass{FP}\cap \class{W}_{\textnormal{co}},  \dwclass{FI})$ is cogenerated by $\class{T}$ which again is a set of finitely presented complexes.
This completes the proof that $\mathfrak{M}^{FPinj}_{\textnormal{co}} = (\dwclass{FP}, \class{W}_{\textnormal{co}}, \dwclass{FI})$ is an  hereditary  abelian model structure. Because the cogenerating sets consist of finitely presented complexes, it follows from~\cite[Corollary~8.45]{gillespie-book} that  $\textnormal{Ho}(\mathfrak{M}^{FPinj}_{\textnormal{co}})$ is compactly generated and that $\class{S}$ is a set of compact weak generators. Since the trivial objects are the coacyclic complexes, this is a model for the coderived category, $\class{D}_{\textnormal{co}}(R)$. The model structure is finitely generated in the sense of~\cite[Ch.~7]{hovey-model-categories}.

The precise statement about preservation of coproducts is this: Given a collection of complexes $\{X_i\}_{i\in I}$, let $X_i \xrightarrow{\eta_i} \bigoplus_{i\in I} X_i$ denote the canonical injections into a coproduct of $\{X_i\}_{i\in I}$ in $\ch$. Then $$(\bigoplus_{i\in I}X_i, \{\gamma_{\textnormal{co}}(\eta_i)\}_{i\in I})$$ is a coproduct of  $\{X_i\}_{i\in I}$ in  $\textnormal{Ho}(\mathfrak{M}^{FPinj}_{\textnormal{co}})=\class{D}_{\textnormal{co}}(R)$.
In the present case, this statement is an instance of~\cite[Prop.~5.30 or Lemma~8.39]{gillespie-book}.

Since $\class{S}$ consists of compact objects, and since each finitely presented complex (i.e. bounded complex of finitely presented modules) is inductively built up by finitely many extensions of complexes in $\class{S}$, it follows that each finitely presented complex is  compact as an object of $\textnormal{Ho}(\mathfrak{M}^{FPinj}_{\textnormal{co}})=\class{D}_{\textnormal{co}}(R)$. Moreover, any  bounded complex, or even bounded above complex,  is coacyclic  if and only if it is exact. This is because any bounded above exact complex is a transfinite extension of  complexes of the form $D^n(M)\in\class{W}_{\textnormal{co}}$. Thus one may check that the inclusion  $\textnormal{Ch}^b(R\textnormal{-mod})\hookrightarrow \ch$ of the full subcategory of finitely presented complexes, induces a fully faithful inclusion $\class{D}^b(R\textnormal{-mod}) \hookrightarrow  \class{D}_{\textnormal{co}}(R)$ of the bounded derived category of  $R\textnormal{-mod}$,  the abelian category of finitely presented modules. Therefore its essential image, i.e., the complexes in $\class{D}_{\textnormal{co}}(R)$ that are isomorphic to a finitely presented complex, is precisely the subcategory of compact objects of $\class{D}_{\textnormal{co}}(R)$. This follows from the well-known fact that in any compactly generated category, the full subcategory of compact objects equals the smallest thick subcategory containing any set of compact weak generators.
(Note that the argument here is in analogy to the one in~\cite[Cor.~6.13]{stovicek-purity} which shows  that any choice of a fibrant replacement equivalence functor $R \colon \textnormal{Ho}(\mathfrak{M}^{inj}_{\textnormal{co}}) \xrightarrow{} K(Inj)$ restricts to an equivalence between $\class{D}^b(R\textnormal{-mod})$ and the compact objects of $K(Inj)$.)
\end{proof}

\section{Homological dimensions of complexes over coherent regular rings}

Again we let $R$ be a left coherent ring and let
$\mathfrak{M}^{FPinj}_{\textnormal{co}} = (\dwclass{FP}, \class{W}_{\textnormal{co}}, \dwclass{FI})$ denote the FP-injective model structure for the coderived category.

\subsection{Injective and FP-injective characterizations of coherent regular rings}\label{subsec-FP-inj-coherent-rings}

Referring again to~\cite{gillespie}, the cotorsion pair $(\class{FP}, \class{FI})$ lifts to another hereditary cotorsion pair on $\ch$, this one denoted $(\tilclass{FP}, \dgclass{FI})$.  This cotorsion pair is also complete; for example, see~\cite[Lemma~10.47]{gillespie-book}.
A complex in the class $\dgclass{FI}$ will be called  \textbf{DG-FP-injective}\footnote{It is shown in~\cite{emmanouil-kaperonis-K-abs-pure} that these are precisely the K-absolutely pure complexes with FP-injective components.}. We also let $\dwclass{FI}$ (resp. $\exclass{FI}$) denote the class of all complexes (resp. all exact complexes) that are degreewise FP-injective.

\begin{lemma}\label{lemma-derived-fp-injectives-quotient-functor}
Let $R$ be a left coherent ring and let $\class{E}$ denote the class of all exact (i.e. acyclic) complexes of $R$-modules.
Then the triple of classes
$$\mathfrak{M}^{FPinj}_{\textnormal{der}} = (\dwclass{FP}, \class{E}, \dgclass{FI})$$
represents an hereditary abelian model structure on $\ch$ and satisfies:
\begin{enumerate}
\item Its homotopy category identifies with the usual derived category of $R$, i.e.,  $$\textnormal{Ho}(\mathfrak{M}^{FPinj}_{\textnormal{der}})=\class{D}(R),$$
 and the canonical functor $\gamma \colon \ch \xrightarrow{} \class{D}(R)$ preserves coproducts.
\item There is a unique functor $\bar{\gamma} \colon  \class{D}_{\textnormal{co}}(R) \xrightarrow{} \class{D}(R)$, 
called the \textbf{quotient functor}, satisfying $\gamma = \bar{\gamma} \circ \gamma_{\textnormal{co}}$.  Moreover, $\bar{\gamma}$ is  a triangulated functor and it preserves coproducts.
\end{enumerate}
\end{lemma}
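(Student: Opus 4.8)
The plan is to realize $\mathfrak{M}^{FPinj}_{\textnormal{der}}$ as a Hovey triple assembled from the two hereditary cotorsion pairs already lifted from $(\class{FP},\class{FI})$, to read off its homotopy category from the fact that its trivial objects are precisely the exact complexes, and finally to produce $\bar\gamma$ from the single inclusion $\class{W}_{\textnormal{co}}\subseteq\class{E}$.

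For the model structure I would recall the two complete hereditary cotorsion pairs on $\ch$ coming from \cite{gillespie}, namely $(\dgclass{FP},\tilclass{FI})$ and $(\tilclass{FP},\dgclass{FI})$, and use Proposition~\ref{prop-complexes-fp-projectives} to rewrite the first as $(\dwclass{FP},\tilclass{FI})$. Setting $\class{C}=\dwclass{FP}$, $\class{W}=\class{E}$, and $\class{F}=\dgclass{FI}$, the standard identities $\dgclass{FP}\cap\class{E}=\tilclass{FP}$ and $\dgclass{FI}\cap\class{E}=\tilclass{FI}$ (the defining feature of the DG-classes) give $(\class{C}\cap\class{W},\class{F})=(\tilclass{FP},\dgclass{FI})$ and $(\class{C},\class{W}\cap\class{F})=(\dwclass{FP},\tilclass{FI})$, which are exactly the two complete cotorsion pairs above. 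Since $\class{E}$ is thick, Hovey's correspondence (see \cite{gillespie-book}) produces the abelian model structure, and it is hereditary because both cotorsion pairs are.

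Part (1) is then immediate: in an abelian model structure a chain map is a weak equivalence if and only if its mapping cone is trivial (lies in $\class{W}$), and here a complex is trivial exactly when it is exact, so the weak equivalences are precisely the quasi-isomorphisms and $\textnormal{Ho}(\mathfrak{M}^{FPinj}_{\textnormal{der}})=\class{D}(R)$. Coproduct preservation of $\gamma$ follows exactly as for $\gamma_{\textnormal{co}}$ in Proposition~\ref{prop-coderived-fp-injectives}, using that $\class{W}=\class{E}$ is closed under coproducts and citing \cite[Prop.~5.30 or Lemma~8.39]{gillespie-book}.

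For part (2) the crux is that every weak equivalence of $\mathfrak{M}^{FPinj}_{\textnormal{co}}$ is already a quasi-isomorphism. Indeed its trivial cofibrations are monomorphisms with cokernel in $\dwclass{FP}\cap\class{W}_{\textnormal{co}}\subseteq\class{W}_{\textnormal{co}}\subseteq\class{E}$, and its trivial fibrations are epimorphisms with kernel in $\dwclass{FI}\cap\class{W}_{\textnormal{co}}=\tilclass{FI}$ (Corollary~\ref{cor-coacyclic-fp-injectives}), which lies in $\class{E}$; both kinds of maps are thus quasi-isomorphisms, hence so is every weak equivalence. Consequently $\gamma$ inverts every $\mathfrak{M}^{FPinj}_{\textnormal{co}}$-weak equivalence, and since $\gamma_{\textnormal{co}}$ is the localization of $\ch$ at these maps, the universal property yields a unique $\bar\gamma$ with $\gamma=\bar\gamma\circ\gamma_{\textnormal{co}}$; equivalently, $\class{D}(R)$ is the Verdier quotient of $\class{D}_{\textnormal{co}}(R)$ obtained by further inverting the exact-but-coacyclic complexes, so $\bar\gamma$ is triangulated. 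Finally $\bar\gamma$ preserves coproducts: every object of $\class{D}_{\textnormal{co}}(R)$ is $\gamma_{\textnormal{co}}(X)$ for some $X$, and $\bar\gamma(\bigoplus_i\gamma_{\textnormal{co}}(X_i))=\bar\gamma\gamma_{\textnormal{co}}(\bigoplus_iX_i)=\gamma(\bigoplus_iX_i)=\bigoplus_i\gamma(X_i)=\bigoplus_i\bar\gamma\gamma_{\textnormal{co}}(X_i)$, using that both $\gamma_{\textnormal{co}}$ and $\gamma$ preserve coproducts. The main obstacle is not any single estimate but the correct packaging: once one observes $\class{W}_{\textnormal{co}}\subseteq\class{E}$, the derived model structure is simply a localization of the coderived one, and every assertion about $\bar\gamma$ follows formally. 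An alternative route is to note that the identity functor is left Quillen from $\mathfrak{M}^{FPinj}_{\textnormal{co}}$ to $\mathfrak{M}^{FPinj}_{\textnormal{der}}$ — the two share the cofibrant class $\dwclass{FP}$, and the trivial cofibrations of the former are trivial cofibrations of the latter — and to take $\bar\gamma$ to be its total left derived functor.
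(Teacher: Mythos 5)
Your proposal is correct and follows essentially the same route as the paper: the Hovey triple is assembled from the two lifted cotorsion pairs via $\tilclass{FP}=\dwclass{FP}\cap\class{E}$ and $\tilclass{FI}=\dgclass{FI}\cap\class{E}$, and $\bar{\gamma}$ is obtained from the universal property of localization using $\class{W}_{\textnormal{co}}\subseteq\class{E}$ (the paper packages this as the Triangulated Localization Theorem \cite[Theorem~6.44]{gillespie-book}), with coproduct preservation of $\bar{\gamma}$ deduced from $\gamma=\bar{\gamma}\circ\gamma_{\textnormal{co}}$ exactly as you do. Your closing remark that the identity is left Quillen from $\mathfrak{M}^{FPinj}_{\textnormal{co}}$ to $\mathfrak{M}^{FPinj}_{\textnormal{der}}$ is a valid equivalent packaging of the same observation.
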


\begin{proof}
We have $\tilclass{FP} = \dwclass{FP}\cap\class{E}$ and $\class{E}\cap\dgclass{FI} = \tilclass{FI}$,  by Proposition~\ref{prop-complexes-fp-projectives} along with~\cite[Corollary~3.13]{gillespie}. So the complete hereditary cotorsion pairs $(\tilclass{FP}, \dgclass{FI})$, and  $(\dwclass{FP}, \tilclass{FI})$ 
 imply that $\mathfrak{M}^{FPinj}_{\textnormal{der}}$ is an hereditary abelian model structure. 
Since the trivial objects  are precisely the exact complexes, it is a model for the usual derived category, $\class{D}(R)$.
As was the case for $\gamma_{\textnormal{co}}$ in Proposition~\ref{prop-coderived-fp-injectives}, it follows from~\cite[Corollary~8.45]{gillespie-book} that the canonical functor $\gamma \colon \ch \xrightarrow{} \textnormal{Ho}(\mathfrak{M}^{FPinj}_{\textnormal{der}})$ is compatible with coproducts.

The existence and uniqueness of the quotient functor $\bar{\gamma}$, and the fact that it is a triangulated functor, follows from the Triangulated Localization Theorem~\cite[Theorem~6.44]{gillespie-book}. The commutativity of the diagram $\gamma = \bar{\gamma} \circ \gamma_{\textnormal{co}}$ implies that the quotient functor $\bar{\gamma}$ preserves coproducts.
\end{proof}

Building on  the characterizations of left coherent regular rings given in Theorem~\ref{prop-characterize-fp}  we have the following proposition and theorems. The main idea  comes from~\cite[\S2]{Iacob-Iyengar-regular}, though we are substituting   $\textnormal{Ho}(\mathfrak{M}^{FPinj}_{\textnormal{co}}) = \class{D}_\textnormal{co}(R)$ for the (equivalent) homotopy category $K(Inj)$.

\begin{proposition}[Characterizations of coherent regular rings]\label{prop-regular-coderived-char}
Let $R$ be a left coherent ring. The following are equivalent:
\begin{enumerate}
\item $R$ is left regular.
 \setcounter{enumi}{5}
\item\label{quotient-func} The quotient functor $\bar{\gamma} \colon   \class{D}_{\textnormal{co}}(R)  \xrightarrow{} \class{D}(R)$ is a triangulated equivalence. In this case, $\class{D}_{\textnormal{co}}(R)  = \class{D}(R)$ and  $\bar{\gamma}$ is the identity functor.
\item\label{coacyc} The class $\class{W}_{\textnormal{co}}$ of all coacyclic complexes coincides with the class $\class{E}$ of all exact complexes.
\end{enumerate}
\end{proposition}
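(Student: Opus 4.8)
The plan is to prove the cycle $(1)\Rightarrow(\ref{coacyc})\Rightarrow(\ref{quotient-func})\Rightarrow(1)$, treating the equivalence of $(\ref{quotient-func})$ and $(\ref{coacyc})$ as a formal consequence of the two model structures and reserving the homological input for the two remaining implications.

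First I would dispatch $(\ref{quotient-func})\Leftrightarrow(\ref{coacyc})$ by comparing the model structures. The coderived structure $\mathfrak{M}^{FPinj}_{\textnormal{co}}$ and the derived structure $\mathfrak{M}^{FPinj}_{\textnormal{der}}$ share the same cofibrations, namely the monomorphisms with cokernel in $\dwclass{FP}$, and, because every coacyclic complex is exact, the derived structure has at least as many trivial cofibrations. Thus the derived structure is a localization of the coderived one, and the comparison $\bar{\gamma}$ of Lemma~\ref{lemma-derived-fp-injectives-quotient-functor} is the corresponding Verdier localization, obtained by inverting the images of the exact complexes: indeed $\bar{\gamma}\gamma_{\textnormal{co}}(X)=\gamma(X)=0$ exactly when $X$ is exact. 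Such a localization is an equivalence precisely when the subcategory it inverts is already trivial in $\class{D}_{\textnormal{co}}(R)$, that is, when every exact complex is coacyclic; since $\class{W}_{\textnormal{co}}\subseteq\class{E}$ always holds, this is precisely the equality $\class{W}_{\textnormal{co}}=\class{E}$ of $(\ref{coacyc})$.

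For $(1)\Rightarrow(\ref{coacyc})$ I would show that every exact complex $E$ is coacyclic. Choosing a fibrant replacement $E\hookrightarrow\hat{E}$ in $\mathfrak{M}^{FPinj}_{\textnormal{co}}$, its cokernel lies in $\dwclass{FP}\cap\class{W}_{\textnormal{co}}\subseteq\class{E}$, so $\hat{E}$ is again exact, now with all components in $\class{FI}$, and $E$ is coacyclic if and only if $\hat{E}$ is. By Corollary~\ref{cor-coacyclic-fp-injectives} it then suffices to see that the cycles $Z_n\hat{E}$ are FP-injective. Here regularity enters: for a finitely presented $F$ set $d=\text{pd}_R(F)$, which is finite by Proposition~\ref{prop-characterize-fp}. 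The short exact sequences $0\to Z_{n+1}\hat{E}\to\hat{E}_{n+1}\to Z_n\hat{E}\to 0$ together with the hereditary vanishing $\Ext^{\geq 1}_R(F,\hat{E}_{n+1})=0$ produce dimension-shifting isomorphisms $\Ext^1_R(F,Z_n\hat{E})\cong\Ext^{d+1}_R(F,Z_{n+d}\hat{E})=0$. As $F$ is arbitrary, each $Z_n\hat{E}\in\class{FI}$, so $\hat{E}\in\tilclass{FI}$ is coacyclic. This shift is harmless for unbounded $E$ because it is iterated only the finite number $d$ of times.

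Finally, for $(\ref{quotient-func})\Rightarrow(1)$ I would exploit compactness. A triangulated equivalence preserves compact objects, and by Proposition~\ref{prop-coderived-fp-injectives}(4) the compact objects of $\class{D}_{\textnormal{co}}(R)$ are exactly the finitely presented complexes, while the compact objects of $\class{D}(R)$ are the perfect complexes. Since $\bar{\gamma}$ is induced by the identity on $\ch$, it carries each finitely presented complex to itself in $\class{D}(R)$; an equivalence therefore forces every bounded complex of finitely presented modules to be perfect, which is exactly condition (3) of Proposition~\ref{prop-characterize-fp}, hence left regularity of $R$. I expect this last step to be the crux: everything hinges on the identification of the two classes of compact objects, so the main obstacle is having Proposition~\ref{prop-coderived-fp-injectives}(4) — the coherent analogue of Stovicek's computation of the compacts of the coderived category — firmly in hand, together with the standard identification of the compacts of $\class{D}(R)$ with the perfect complexes.
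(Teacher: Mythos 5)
Your proposal is correct, but it routes the implication from regularity differently than the paper does. The paper proves $(1)\Leftrightarrow(\ref{quotient-func})$ in both directions at once via the d\'evissage criterion: a coproduct-preserving triangulated functor between compactly generated categories is an equivalence if and only if it restricts to an equivalence on compact objects, so left regularity (every finitely presented complex is perfect, by Proposition~\ref{prop-characterize-fp}) gives the equivalence on compacts and hence globally, and $(\ref{coacyc})$ then follows formally. You use that machinery only for $(\ref{quotient-func})\Rightarrow(1)$, exactly as the paper does via Proposition~\ref{prop-coderived-fp-injectives}(4); but for the forward direction you instead prove $(1)\Rightarrow(\ref{coacyc})$ by hand: replace an exact complex by an exact complex of FP-injectives up to a coacyclic error using completeness of $(\dwclass{FP}\cap\class{W}_{\textnormal{co}},\dwclass{FI})$, then dimension-shift along $0\to Z_{n+1}\hat{E}\to \hat{E}_{n+1}\to Z_n\hat{E}\to 0$ using $\Ext^{\geq 1}_R(F,\class{FI})=0$ for finitely presented $F$ (valid over a coherent ring) and $\Ext^{>d}_R(F,-)=0$ for $d=\text{pd}_R(F)<\infty$, so that Corollary~\ref{cor-coacyclic-fp-injectives} applies. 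This is a genuinely different and more elementary argument for that direction; as a byproduct it directly establishes $\exclass{FI}=\tilclass{FI}$, i.e.\ condition (\ref{item-fp-two}) of Theorem~\ref{them-coherent-regular-characterizations}, which the paper only obtains later through the model-structure comparison. What the paper's route buys is symmetry (one criterion handles both directions of $(1)\Leftrightarrow(\ref{quotient-func})$) and no explicit Ext computations; what yours buys is a concrete homological proof that exactness implies coacyclicity under regularity, at the cost of carrying the fibrant-replacement and dimension-shifting details. Your Verdier-localization treatment of $(\ref{quotient-func})\Leftrightarrow(\ref{coacyc})$ is sound and agrees in substance with the paper's observation that the kernel of an equivalence consists of the zero objects.
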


\begin{proof}
\ref{coacyc} $\Rightarrow$ \ref{quotient-func}. If $\class{W}_{\textnormal{co}}=\class{E}$, then clearly $\bar{\gamma}$ is the identity, so it's an equivalence. \\
\ref{quotient-func} $\Rightarrow$ \ref{coacyc}.
Evidently, $\Ker{\bar{\gamma}} =\class{E}$. But the kernel of any additive equivalence must equal the class of zero objects, and  $\class{W}_{\textnormal{co}}$ is the class of zero objects of $\class{D}_{\textnormal{co}}(R)$.

1 $\Leftrightarrow$ \ref{quotient-func}.  It is well-known that a coproduct preserving triangulated functor between compactly generated categories is an equivalence if and only if it restricts to an equivalence between their full subcategories of  compact objects; see~\cite[2.4]{Iacob-Iyengar-regular} and the references given there.
If $R$ is left coherent regular then, by Theorem~\ref{prop-characterize-fp},  each bounded complex of finitely presented modules  is  perfect. This implies that the restriction of $\bar{\gamma} \colon  \class{D}_{\textnormal{co}}(R) \xrightarrow{} \class{D}(R)$ to the (strictly) full subcategory $\class{D}^b(R\textnormal{-mod}) \hookrightarrow  \class{D}_{\textnormal{co}}(R)$, of bounded complexes of finitely presented modules, is an equivalence onto the (strictly) full subcategory $K^{b}(\textnormal{proj}) \hookrightarrow  \class{D}(R)$, of perfect complexes. It follows that $\bar{\gamma}$ is an equivalence since it is a coproduct preserving triangulated functor (Lemma~\ref{lemma-derived-fp-injectives-quotient-functor}) between compactly generated categories. Conversely, if $\bar{\gamma}$ is an equivalence then because we have $\gamma = \bar{\gamma} \circ \gamma_{\textnormal{co}}$, Proposition~\ref{prop-coderived-fp-injectives}(4) implies that each bounded complex of finitely presented modules  is  perfect. So by Theorem~\ref{prop-characterize-fp} $R$ must be left coherent regular.
\end{proof}

Now we can characterize coherent regular rings in terms of  complexes of FP-injective modules. First, recall that for a (left) coherent ring there is a well-behaved notion of the FP-injective dimension, which we denote by $\text{fpid}_R(M)$, of a module $M$. See~\cite[\S3]{stenstrom-fp} for details. Using the Avramov-Foxby approach from~\cite{avramov-foxby-hom-dims-complexes}, we may extend it to the \emph{FP-injective dimension} of a complex $X$:
$$\text{fpid}_R(X) := \text{min}\{\,n \in \Z \,|\, X \simeq I  \text{ for some } I \in \dgclass{FI} \text{ with } I_i = 0,  \ \forall \,i < -n\,\}.$$
Also, by replacing $\dgclass{FI}$ with  the class $\dwclass{FI}$ of all complexes of FP-injective modules, we  obtain the \emph{graded FP-injective dimension} of $X$, denoted $\text{gr-fpid}_R(X)$. Of course we have  $\text{gr-fpid}_R(X) \le \text{fpid}_R(X)$ since in general $\dgclass{FI} \subseteq \dwclass{FI}$.

\begin{theorem}[FP-injective characterizations of coherent regular rings]\label{them-coherent-regular-characterizations}
Let $R$ be a left coherent ring. The following are equivalent:
\begin{enumerate}
\item $R$ is left regular.
\setcounter{enumi}{7}
\item\label{item-fp-one}  Every chain complex of FP-injective modules is
  DG-FP-injective. That is, $\dwclass{FI} = \dgclass{FI}$.
\item\label{item-fp-two}  Every exact complex of FP-injective modules is categorically FP-injective, equivalently, pure exact. That is,  $\exclass{FI} = \tilclass{FI}$.
\item\label{item-fp-three} The class $\dgclass{FI}$ of DG-FP-injective complexes is closed under direct sums.
\item\label{item-fp-four} The class $\dgclass{FI}$ of DG-FP-injective complexes is closed  under direct limits.
\item\label{item-fp-five} $\text{gr-fpid}_R(X) = \text{fpid}_R(X)$ for all complexes of modules $X$.
\item\label{item-fp-six} $\text{gr-fpid}_R(M) = \text{fpid}_R(M)$ for all modules $M$.
\end{enumerate}
\end{theorem}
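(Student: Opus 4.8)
The plan is to funnel every condition through the equality $\class{W}_{\textnormal{co}}=\class{E}$, which by Proposition~\ref{prop-regular-coderived-char} is equivalent to $R$ being left regular, and to strip off the dimension statements last. Throughout I will lean on the two Hovey triples $(\dwclass{FP},\class{W}_{\textnormal{co}},\dwclass{FI})$ and $(\dwclass{FP},\class{E},\dgclass{FI})$ of Proposition~\ref{prop-coderived-fp-injectives} and Lemma~\ref{lemma-derived-fp-injectives-quotient-functor}, on the identity $\tilclass{FI}=\class{W}_{\textnormal{co}}\cap\dwclass{FI}$ of Corollary~\ref{cor-coacyclic-fp-injectives}, and on the fact (\cite{stenstrom-fp}) that over a left coherent ring $\class{FI}$ is closed under direct sums and direct limits. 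Note also the two bookkeeping equalities $\exclass{FI}=\class{E}\cap\dwclass{FI}$ and $\tilclass{FI}=\class{E}\cap\dgclass{FI}$.

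First I would settle the equivalence of (1), \ref{item-fp-one} and \ref{item-fp-two}. For $(1)\Rightarrow\ref{item-fp-one}$: left regularity gives $\class{W}_{\textnormal{co}}=\class{E}$, so the two triples above share the cofibrant class $\dwclass{FP}$ and the trivial class $\class{E}=\class{W}_{\textnormal{co}}$; since the fibrant class of a Hovey triple is the right Ext-orthogonal of the trivially cofibrant class $\dwclass{FP}\cap\class{E}$, the fibrant classes agree, i.e. $\dwclass{FI}=\dgclass{FI}$. The implication $\ref{item-fp-one}\Rightarrow\ref{item-fp-two}$ is immediate from the two bookkeeping equalities. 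For $\ref{item-fp-two}\Rightarrow(1)$ I would prove $\class{E}\subseteq\class{W}_{\textnormal{co}}$: given an exact complex $E$, take a special $\dwclass{FI}$-preenvelope $0\to E\to F\to C\to 0$ relative to the complete cotorsion pair $(\dwclass{FP}\cap\class{W}_{\textnormal{co}},\dwclass{FI})$; then $C\in\class{W}_{\textnormal{co}}\subseteq\class{E}$ forces $F$ exact, whence $F\in\exclass{FI}=\tilclass{FI}\subseteq\class{W}_{\textnormal{co}}$ by \ref{item-fp-two}, and thickness of $\class{W}_{\textnormal{co}}$ yields $E\in\class{W}_{\textnormal{co}}$; Proposition~\ref{prop-regular-coderived-char} then gives left regularity.

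Next the closure conditions. Since $\dwclass{FI}$ is closed under direct sums and direct limits, $\ref{item-fp-one}\Rightarrow\ref{item-fp-three}$ and $\ref{item-fp-one}\Rightarrow\ref{item-fp-four}$ are clear, and $\ref{item-fp-four}\Rightarrow\ref{item-fp-three}$ is trivial. Because $\dgclass{FI}$ is the right-hand class of the hereditary cotorsion pair $(\tilclass{FP},\dgclass{FI})$ it is closed under cokernels of monomorphisms, and any filtered colimit is a cokernel of a monomorphism between coproducts, so $\ref{item-fp-three}\Rightarrow\ref{item-fp-four}$. For $\ref{item-fp-four}\Rightarrow\ref{item-fp-one}$ I would first record the lemma that every bounded-above complex of FP-injective modules is DG-FP-injective — the FP-injective analogue of Spaltenstein's result~\cite{spaltenstein} for $K$-injectives — proved by writing such a complex as an inverse limit of a surjective tower of bounded complexes, each an extension of complexes $S^n(A)$ with $A\in\class{FI}$. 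Then any $X\in\dwclass{FI}$ is the filtered colimit $\colim_n\sigma_{<n}X$ of its brutal truncations $\sigma_{<n}X$, each bounded above and hence in $\dgclass{FI}$, so closure under direct limits puts $X\in\dgclass{FI}$, giving $\dwclass{FI}=\dgclass{FI}$.

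Finally the homological dimensions. The implication $\ref{item-fp-one}\Rightarrow\ref{item-fp-five}$ is immediate, since under $\dwclass{FI}=\dgclass{FI}$ the defining classes for $\text{fpid}_R$ and $\text{gr-fpid}_R$ coincide, and $\ref{item-fp-five}\Rightarrow\ref{item-fp-six}$ is a special case. The crux is $\ref{item-fp-six}\Rightarrow\ref{item-fp-two}$, and I would establish it by showing every cycle module of an exact complex of FP-injectives is FP-injective. Let $C=Z_0(Y)$ with $Y$ an exact complex of FP-injectives; the upper half $\cdots\to Y_2\to Y_1\to C\to 0$ is exact, a left resolution of $C$ by FP-injective modules, so the complex $\cdots\to Y_3\to Y_2\to Y_1$ placed in homological degrees $\ge 0$ (with $Y_1$ in degree $0$) lies in $\dwclass{FI}$, is bounded below by $0$, and is quasi-isomorphic to $C$ (its $0$-th homology is $\operatorname{im}(Y_1\to Y_0)=Z_0(Y)=C$); hence $\text{gr-fpid}_R(C)\le 0$. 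By \ref{item-fp-six} and the agreement of $\text{fpid}_R$ with the classical FP-injective dimension on modules (as in \cite[Remark~2.8.P]{avramov-foxby-hom-dims-complexes}), $\text{fpid}_R(C)=\text{gr-fpid}_R(C)\le 0$ forces $C\in\class{FI}$, giving $\exclass{FI}\subseteq\tilclass{FI}$ and hence \ref{item-fp-two}. I expect this last step to be the main obstacle: the inequality $\text{gr-fpid}_R\le\text{fpid}_R$ is automatic, and the real content is that equality for every module forces the degreewise and categorical notions of FP-injective acyclicity to coincide. The subtlety to get right is that $\text{gr-fpid}_R(C)\le 0$ detects membership of $C$ in the generally larger class $\class{C}$ of cycle modules of exact FP-injective complexes rather than in $\class{FI}$ itself; recognising this and manufacturing the bounded-below degreewise-FP-injective replacement out of the upper half of $Y$ is exactly where the hypothesis is used. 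The only other nonformal ingredient is the bounded-above DG-FP-injectivity lemma feeding $\ref{item-fp-four}\Rightarrow\ref{item-fp-one}$.
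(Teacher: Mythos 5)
Your overall architecture coincides with the paper's: the equivalence of (1), (6), (7) is funneled through $\class{W}_{\textnormal{co}}=\class{E}$ and the two Hovey triples of Proposition~\ref{prop-coderived-fp-injectives} and Lemma~\ref{lemma-derived-fp-injectives-quotient-functor}; the closure conditions are handled by brutal truncations; and (11)$\Rightarrow$(7) is proved, exactly as in the paper, by realizing each cycle module of an exact complex of FP-injectives as the zeroth homology of a bounded-below degreewise FP-injective complex and invoking the agreement of $\text{fpid}_R$ with the classical dimension. One rerouting is genuinely different and correct: for (7)$\Rightarrow$(1) you embed an exact complex $E$ into some $F\in\dwclass{FI}$ via the complete cotorsion pair $(\dwclass{FP}\cap\class{W}_{\textnormal{co}},\dwclass{FI})$ and deduce $\class{E}\subseteq\class{W}_{\textnormal{co}}$ — this is the argument the paper deploys for the injective version in Theorem~\ref{them-coherent-regular-characterizations-injective} — whereas the paper proves (7)$\Rightarrow$(6) directly by approximating an arbitrary $X\in\dwclass{FI}$ with the hereditary cotorsion pair $(\tilclass{FP},\dgclass{FI})$. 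Both work; the paper's version avoids passing back through coacyclicity at that step, yours gets regularity straight from Proposition~\ref{prop-regular-coderived-char}.

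The one step whose justification does not hold up is (8)$\Rightarrow$(9). It is not true that an arbitrary filtered colimit is the cokernel of a \emph{monomorphism} between coproducts: the canonical presentation $\bigoplus_{i\le j}X_i\to\bigoplus_i X_i$ of $\varinjlim X_i$ already has a nonzero kernel for the directed poset $1\le 2\le 3$ once the pair $(1,3)$ is included, so closure under coproducts and cokernels of monomorphisms does not formally yield closure under filtered colimits. The claim is correct for $\N$-indexed towers, where the telescope map $1-\text{shift}$ is injective, and that special case is all you need: your own filtration $I=\varinjlim_{n\in\N}I(n)$ by truncations gives (8)$\Rightarrow$(6) directly, using closure of $\dgclass{FI}$ under direct sums and cokernels of monomorphisms — which is precisely the paper's argument — and then (9) follows from (6). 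So replace the general claim by a direct proof of (8)$\Rightarrow$(6); nothing else in your cycle is affected. A minor further caveat: the lemma that bounded-above complexes of FP-injectives are DG-FP-injective is standard (the paper also uses it without proof), but your proposed surjective-tower proof needs a dual Eklof/Milnor-sequence ingredient to control $\Ext^1$ of the inverse limit, so it is not quite as formal as stated.
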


\begin{proof}
1 $\Leftrightarrow$ \ref{item-fp-one}. The model structures $(\dwclass{FP}, \class{E}, \dgclass{FI})$ and $(\dwclass{FP}, \class{W}_{\textnormal{co}}, \dwclass{FI})$, from Lemma~\ref{lemma-derived-fp-injectives-quotient-functor} and Proposition~\ref{prop-coderived-fp-injectives}, have the same cofibrant objects. It follows that  $\class{W}_{\textnormal{co}}=\class{E}$ if and only if $\dwclass{FI}=\dgclass{FI}$. (The ``if'' direction is an instance of~\cite[Prop.~3.2]{gillespie-recollement}.) So by Proposition~\ref{prop-regular-coderived-char},  $R$ is left  regular if and only if $\dwclass{FI} = \dgclass{FI}$.  \\
\ref{item-fp-one} $\Rightarrow$ \ref{item-fp-four}. This is immediate since the class of FP-Injective modules is closed  under direct limits when $R$ is left coherent~\cite[Them.~3.2(a)(d)]{stenstrom-fp}.\\
\ref{item-fp-four} $\Rightarrow$ \ref{item-fp-three}. It is clear that $\dgclass{FI}$ is closed under \emph{finite} direct sums. So this follows from the fact that any (infinite) direct sum may be expressed as the direct limit of its finite summands along with their natural inclusions.\\
\ref{item-fp-three} $\Rightarrow$ \ref{item-fp-one}. Let $I = \cdots \rightarrow I_1 \rightarrow I_0 \rightarrow I_{-1} \rightarrow \cdots$ be a complex of FP-injective modules. For each $n \in \Z$, set $I(n) = 0 \rightarrow I_n \rightarrow I_{n-1} \rightarrow I_{n-2} \rightarrow \cdots$. Being a bounded above complex of FP-injectives, each $I(n)$ is a DG-FP-injective complex. Clearly we have $I = \bigcup_{n \in \N} I(n)$, and this may also be expressed as $I = \varinjlim_{n\in\N} I(n)$.  So using the standard construction of a direct limit as the cokernel of a morphism between two coproducts, $I$ is the cokernel of a map $\theta \colon \bigoplus_{n\in\N}  I(n) \xrightarrow{}  \bigoplus_{n\in\N}  I(n)$, defined on each summand by the rule $e_n - e_{n+1}\circ i_n \colon I(n) \rightarrow\bigoplus_{n\in\N} I(n)$ where $e_n \colon I(n) \rightarrow\bigoplus_{n\in\N} I(n)$ denotes the canonical insertion into the coproduct and $i_n \colon I(n) \rightarrow I(n+1)$ is  the obvious inclusion map. Evidently, $\theta$ is a monomorphism, so there is a short exact sequence $$0 \rightarrow \bigoplus_{n\in\N}  I(n) \xrightarrow{\theta}  \bigoplus_{n\in\N}  I(n) \xrightarrow{} I \rightarrow 0.$$ 	
Since the class of DG-FP-Injective complexes is closed under direct sums and cokernels of monomorphisms (\cite[Cor.~3.13]{gillespie}), it follows that $I$ is DG-FP-Injective.\\
 \ref{item-fp-one} $\Rightarrow$  \ref{item-fp-two}. First, we note that for the current case of a left coherent ring,  \cite[Prop.~2.6]{bravo-gillespie-abs-clean-level-complexes} shows  that $\tilclass{FI}$ is precisely the class of categorically FP-injective complexes. Second, we also note  that an exact complex of FP-injectives is pure exact if and only if it is in $\tilclass{FI}$. This follows easily from the fact that a submodule of an FP-injective is again FP-injective if and only if it is pure~\cite[Prop.~2.6(d)]{stenstrom-fp}.  Now let $F$ be an exact complex of FP-Injective modules. By hypothesis, $F$ is DG-FP-injective, so $F \in \mathcal{E} \cap \dgclass{FI}= \tilclass{FI}$, where $\class{E}$ is the class of exact complexes and the equality comes from~\cite[Cor.~3.13]{gillespie}. \\
 \ref{item-fp-two} $\Rightarrow$  \ref{item-fp-one}. Let $X$ be any complex of FP-Injective modules. Since $(\tilclass{FP}, \dgclass{FI})$ is a complete cotorsion pair, there is an exact sequence $0 \rightarrow C \rightarrow D \rightarrow X \rightarrow 0$ with $C \in \dgclass{FI}$ and with $D \in \tilclass{FP}$. Then $D$ is an exact complex of FP-injective modules. So by hypothesis, $D\in \tilclass{FI}=\mathcal{E} \cap \dgclass{FI}$. Since the cotorsion pair  $(\tilclass{FP}, \dgclass{FI})$ is hereditary (\cite[Cor.~3.13]{gillespie}) and since  $C, D\in\dgclass{FI}$,  it follows that $X$ is also DG-FP-injective. \\
\ref{item-fp-one} $\Rightarrow$ \ref{item-fp-five} is clear. \\
\ref{item-fp-five} $\Rightarrow$ \ref{item-fp-six}. For a  module $M$, we set $\text{gr-fpid}_R(M) := \text{gr-fpid}_R(S^0(M))$ which by hypothesis equals $\text{fpid}_R(S^0(M))$. But it turns out that $\text{fpid}_R(S^0(M)) = \text{fpid}_R(M)$ (analogous to the corresponding fact for flat dimension~\cite[Remark 2.8.F]{avramov-foxby-hom-dims-complexes}). \\
\ref{item-fp-six} $\Rightarrow$ \ref{item-fp-two}.  Let $I = \cdots \rightarrow I_1 \xrightarrow{d_1} I_0 \xrightarrow{d_0} I_{-1} \rightarrow \cdots $ be an exact complex of FP-injective modules.
For each integer $k$, consider the truncated complex $${}_kI := \cdots \rightarrow I_{k+2} \rightarrow I_{k+1} \xrightarrow{d_{k+1}} I_k \rightarrow 0.$$ There is a quasi-isomorphism ${}_kI \rightarrow S^k(Z_{k-1}I)$  given by $d_k:I_k \rightarrow Z_{k-1}I$, and zeros everywhere else. This shows  $\text{gr-fpid}_R(Z_{k-1}I) =0$, and so by the hypothesis $\text{fpid}_R(Z_{k-1}I) =0$. Thus $Z_{k-1}I$ is FP-injective, for each integer $k$. 
\end{proof}

Next we give the analogous characterizations for complexes of injectives. The following result is the extension of Iacob-Iyengar's result~\cite[Theorem~1.1]{Iacob-Iyengar-regular} from Noetherian to coherent rings. Thus it extends to  coherent rings their answer to Foxby-Avramov's question from~\cite[Question~3.8]{avramov-foxby-hom-dims-complexes}.

\begin{theorem}[Injective characterizations of coherent regular rings]\label{them-coherent-regular-characterizations-injective}
Let $R$ be a left coherent ring. Then the  following are equivalent:
\begin{enumerate}
\item $R$ is left regular.
 \setcounter{enumi}{13}
\item\label{item-inj-one}  Every complex of injective modules is
  DG-injective. That is, $\dwclass{I} = \dgclass{I}$.
\item  Every exact complex of injective modules is  categorically injective, equivalently, contractible. That is,  $\exclass{I} = \tilclass{I}$.
\item $\text{gr-id}_R(X) = \text{id}_R(X)$ for all complexes of modules $X$.
\item\label{item-inj-four}  $\text{gr-id}_R(M) = \text{id}_R(M)$ for all modules $M$.
\end{enumerate}
\end{theorem}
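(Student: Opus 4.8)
The plan is to follow the proof of Theorem~\ref{them-coherent-regular-characterizations} closely, substituting the injective model structures for the FP-injective ones. Concretely, I would set up two injective abelian model structures on $\ch$ that share the \emph{same} (maximal) cofibrant class of all complexes: the usual injective model structure $\mathfrak{M}^{inj}_{\textnormal{der}} = (\text{All}, \class{E}, \dgclass{I})$ presenting the derived category $\class{D}(R)$, available over any ring, and the injective model structure $\mathfrak{M}^{inj}_{\textnormal{co}} = (\text{All}, \class{W}_{\textnormal{co}}, \dwclass{I})$ presenting the coderived category. The latter exists over a left coherent $R$ because the cotorsion pair $(\class{W}_{\textnormal{co}}, \dwclass{I})$ in $\ch$ is complete --- this is exactly the pair already produced and used in Proposition~\ref{prop-coacyclic-fp-injectives}. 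The two structures thus differ only in their trivial and fibrant classes.

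For the central equivalence $1 \Leftrightarrow \ref{item-inj-one}$ I would argue as in the step $1 \Leftrightarrow \ref{item-fp-one}$: since $\mathfrak{M}^{inj}_{\textnormal{der}}$ and $\mathfrak{M}^{inj}_{\textnormal{co}}$ have identical cofibrant objects, the criterion of~\cite[Prop.~3.2]{gillespie-recollement} yields $\class{W}_{\textnormal{co}} = \class{E}$ if and only if $\dwclass{I} = \dgclass{I}$, and Proposition~\ref{prop-regular-coderived-char} identifies the former equality with left regularity. Because every complex is cofibrant in both structures, this step is in fact cleaner than its FP-injective counterpart, where the common cofibrant class $\dwclass{FP}$ first had to be matched.

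For $\ref{item-inj-one} \Leftrightarrow (\exclass{I} = \tilclass{I})$ I would use the hereditary injective cotorsion pair $(\class{E}, \dgclass{I})$ in $\ch$, the identity $\tilclass{I} = \class{E} \cap \dgclass{I}$, and the standard fact that an exact complex of injectives with injective cycles is contractible. If $\dwclass{I} = \dgclass{I}$, then any exact complex of injectives lies in $\dwclass{I} \cap \class{E} = \dgclass{I} \cap \class{E} = \tilclass{I}$. Conversely, given $\exclass{I} = \tilclass{I}$ and any $X \in \dwclass{I}$, a special precover $0 \to C \to D \to X \to 0$ from $(\class{E}, \dgclass{I})$ has $C \in \dgclass{I}$ and $D \in \class{E}$; since $D$ is an extension of degreewise injective complexes it is itself degreewise injective, so $D \in \exclass{I} = \tilclass{I} \subseteq \dgclass{I}$, and heredity forces $X \in \dgclass{I}$. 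The two dimension conditions I would then close off as a cycle from $\ref{item-inj-one}$ through the equality for all complexes, then to $\ref{item-inj-four}$, then back to $\exclass{I} = \tilclass{I}$: the first implication is immediate, the second restricts from complexes to modules using $\text{id}_R(S^0(M)) = \text{id}_R(M)$, and the third is the truncation argument already used for FP-injectives --- for an exact complex $I$ of injectives the truncation ${}_kI$ is quasi-isomorphic to $S^k(Z_{k-1}I)$, so $\text{gr-id}_R(Z_{k-1}I) = 0$, whence $\ref{item-inj-four}$ makes each cycle $Z_{k-1}I$ injective and $I \in \tilclass{I}$.

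The main obstacle is not the formal bookkeeping, which transfers almost verbatim from Theorem~\ref{them-coherent-regular-characterizations}, but the justification that $\mathfrak{M}^{inj}_{\textnormal{co}}$ is a genuine model for the coderived category over a merely coherent (not Noetherian) ring. This is where coherence is indispensable, through Stovicek's compact generation of $K(Inj)$; one must confirm that the completeness of $(\class{W}_{\textnormal{co}}, \dwclass{I})$ and the identification $\textnormal{Ho}(\mathfrak{M}^{inj}_{\textnormal{co}}) \simeq \class{D}_{\textnormal{co}}(R)$ developed earlier for the FP-injective fibrant class descend to the injective fibrant class, using the inclusion $\class{I} \subseteq \class{FI}$.
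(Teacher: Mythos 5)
Your proposal is correct, but it takes a genuinely different route from the paper. The paper handles the equivalence of (12)--(15) in one stroke by citing \cite[Prop.~2.1]{Iacob-Iyengar-regular} (a statement valid over any ring), and then links condition (13) to regularity by proving only that $\exclass{I} = \tilclass{I}$ holds if and only if $\exclass{FI} = \tilclass{FI}$, thereby reducing to condition (7) of Theorem~\ref{them-coherent-regular-characterizations}; the nontrivial direction there embeds an exact complex of FP-injectives into a complex of injectives via a special $(\class{W}_{\textnormal{co}}, \dwclass{I})$-approximation and invokes Corollary~\ref{cor-coacyclic-fp-injectives}. You instead bypass the FP-injective classes entirely and re-run the proof of Theorem~\ref{them-coherent-regular-characterizations} verbatim on the injective side: $1 \Leftrightarrow \ref{item-inj-one}$ by comparing Becker's coderived Hovey triple $(\textnormal{All}, \class{W}_{\textnormal{co}}, \dwclass{I})$ with the standard derived one $(\textnormal{All}, \class{E}, \dgclass{I})$ (both having all objects cofibrant) and then applying Proposition~\ref{prop-regular-coderived-char}; $\ref{item-inj-one} \Leftrightarrow (13)$ via the hereditary pair $(\class{E}, \dgclass{I})$ and $\tilclass{I} = \class{E} \cap \dgclass{I}$; and the dimension conditions closed off by the truncation argument. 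All of these steps are sound, and your version is more self-contained: it needs neither the external reference to \cite[Prop.~2.1]{Iacob-Iyengar-regular} nor Stovicek's fact that pure exact complexes of injectives are contractible. Two small corrections to your bookkeeping. First, the completeness of $(\class{W}_{\textnormal{co}}, \dwclass{I})$ \emph{in $\ch$} is Becker's theorem \cite{becker}, valid over any ring; Proposition~\ref{prop-coacyclic-fp-injectives} only produces the restricted pair in the exact category $\cha{FI}$, so it is not the right citation (though the paper itself freely uses the full pair in $\ch$). Second, the ``main obstacle'' you flag in your last paragraph is not actually an obstacle for your argument: you never need $\textnormal{Ho}(\mathfrak{M}^{inj}_{\textnormal{co}})$ to be compactly generated or identified with $K(Inj)$ --- the only place compact generation enters is Proposition~\ref{prop-regular-coderived-char}, which is already proved via the FP-injective model and whose statement ($R$ left regular iff $\class{W}_{\textnormal{co}} = \class{E}$) is independent of which model presents the coderived category.
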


\begin{proof}
Conditions (\ref{item-inj-one})--(\ref{item-inj-four}) are equivalent by~\cite[Prop.~2.1]{Iacob-Iyengar-regular}. Using Theorem~\ref{them-coherent-regular-characterizations} it is enough to show $\exclass{FI} = \tilclass{FI}$ if and only if $\exclass{I} = \tilclass{I}$.
The ``only if'' part follows from Stovicek's result that a pure exact complex of injectives must be contractible. We prove the ``if'' part: Using that $(\class{W}_{\textnormal{co}}, \dwclass{I})$ is a complete cotorsion pair, we may embed any given exact complex  of FP-injective modules, $F$, into a complex $I$ of injective modules
$0 \xrightarrow{} F \xrightarrow{} I \xrightarrow{} W \xrightarrow{} 0$, with $W\in\class{W}_{\textnormal{co}}$. That is, $W$ is coacyclic. Then  $I$ must also be exact, so contractible by hypothesis, hence $I$ too is coacyclic. By the 2 out of 3 property, $F$ must also be in $\class{W}_{\textnormal{co}}$. But by Corollary~\ref{cor-coacyclic-fp-injectives}
we have $\class{W}_{\textnormal{co}}\cap \dwclass{FI}=\tilclass{FI}$, so  $F \in\tilclass{FI}$. This proves $F$ must be pure exact.
\end{proof}

\subsection{Projective and Flat characterizations of coherent regular rings}
Finally, we also have the projective and flat versions of the above results.
The following two theorems were proved by Iacob and Iyengar in~\cite[\S3]{Iacob-Iyengar-regular}. The only difference here is that we   explicitly tie all of their characterizations to the notion of a left coherent regular ring.

Up until now, all `modules' have been left $R$-modules, but note the side swapping here between \emph{left} coherent and \emph{right} $R$-modules.

\begin{theorem}[Projective characterizations of coherent regular rings--\S3 of~\cite{Iacob-Iyengar-regular}]\label{them-coherent-regular-characterizations-projective}
Let $R$ be a left coherent ring. Then the  following are equivalent:
\begin{enumerate}
\item $R$ is left regular.
\setcounter{enumi}{17}
\item\label{item-proj-one} Every chain complex of projective right $R$-modules is
  DG-projective. That is, $\dwclass{P} = \dgclass{P}$ for complexes of right $R$-modules.
\item  Every exact complex of projective right $R$-modules is  categorically projective, equivalently, contractible. That is,  $\exclass{P} = \tilclass{P}$  for right $R$-modules.
\item The class $\class{W}_{\textnormal{ctr}} := \rightperp{(\dwclass{P})}$ of all contraacyclic complexes  of right $R$-modules coincides with the class $\class{E}$ of all exact complexes.
\item $\text{gr-pd}_R(X) = \text{pd}_R(X)$ for all complexes of right $R$-modules $X$.
\item\label{item-proj-five}  $\text{gr-pd}_R(M) = \text{pd}_R(M)$ for all right $R$-modules $M$.
\end{enumerate}
\end{theorem}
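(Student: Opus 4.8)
The plan is to run, on the projective side and for right $R$-modules, the same argument that Proposition~\ref{prop-regular-coderived-char} gave for the coderived category. Both $\class{D}_{\textnormal{ctr}}(R^\circ)$ and $\class{D}(R^\circ)$ are Verdier quotients of the chain homotopy category of right $R$-modules, obtained by inverting the contraacyclic complexes $\class{W}_{\textnormal{ctr}}$ and the exact complexes $\class{E}$ respectively; since every contraacyclic complex is exact, this produces a canonical coproduct-preserving triangulated quotient functor $\bar\gamma_{\textnormal{ctr}}\colon \class{D}_{\textnormal{ctr}}(R^\circ)\to\class{D}(R^\circ)$, dual to the functor $\bar\gamma$ of Lemma~\ref{lemma-derived-fp-injectives-quotient-functor}. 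Its kernel is the image of $\class{E}$, so, exactly as in Proposition~\ref{prop-regular-coderived-char}, $\bar\gamma_{\textnormal{ctr}}$ is a triangulated equivalence if and only if $\class{W}_{\textnormal{ctr}}=\class{E}$. The mutual equivalence of the remaining conditions \ref{item-proj-one}--\ref{item-proj-five} — tying together $\dwclass{P}=\dgclass{P}$, $\exclass{P}=\tilclass{P}$, the equality $\class{W}_{\textnormal{ctr}}=\class{E}$, and the two dimension statements — is the verbatim projective/right-module transcription of the injective case in Theorem~\ref{them-coherent-regular-characterizations-injective} and is formal; this is the content of~\cite[\S3]{Iacob-Iyengar-regular}. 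Thus I would reduce the theorem to a single claim: \emph{$R$ is left regular if and only if $\bar\gamma_{\textnormal{ctr}}$ is an equivalence.}

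For this I would use, as in Proposition~\ref{prop-regular-coderived-char}, that a coproduct-preserving triangulated functor between compactly generated categories is an equivalence precisely when it restricts to an equivalence of compact objects. The compacts of $\class{D}(R^\circ)$ are the perfect complexes of right modules, so the whole question becomes the identification of the compacts of $\class{D}_{\textnormal{ctr}}(R^\circ)$ and the decision of whether they are all perfect. This is the step where left coherence must enter and where the side-swap between \emph{left} coherent and \emph{right} modules is forced.

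The hard part will be precisely this identification. Because we assume only that $R$ is left coherent, the finitely presented right modules need not form an abelian category, so — unlike the coderived compacts described in Proposition~\ref{prop-coderived-fp-injectives}(4) — the compacts of $\class{D}_{\textnormal{ctr}}(R^\circ)$ cannot be read off directly as bounded complexes of finitely presented right modules. The plan instead is to pass through the duality $\Hom_R(-,R)$, in the spirit of Neeman's and Jørgensen's descriptions of $K(\textnormal{Proj})$: left coherence makes $\class{D}^b(R\textnormal{-mod})$, the bounded derived category of finitely presented \emph{left} modules, well behaved, and $R\Hom_R(-,R)$ should identify it contravariantly with the subcategory of compact objects of $\class{D}_{\textnormal{ctr}}(R^\circ)$, carrying perfect left complexes exactly onto perfect right complexes. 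Granting this, $\bar\gamma_{\textnormal{ctr}}$ restricts to an equivalence on compacts if and only if every object of $\class{D}^b(R\textnormal{-mod})$ is perfect, i.e.\ every bounded complex of finitely presented left modules is perfect, which by Proposition~\ref{prop-characterize-fp} is exactly left regularity. Making the duality precise with only left coherence in hand — checking that it lands in \emph{all} compacts and that it matches perfection on the two sides without tacitly assuming regularity — is the genuine obstacle, and it is where I would lean on~\cite[\S3]{Iacob-Iyengar-regular}. As a fallback I would keep a more elementary bridge in reserve: the character functor $\Hom_\Z(-,\Q)$ sends a complex of projective right modules to a complex of injective left modules, exact when the original is, so one can attempt to derive condition~\ref{item-proj-one} from the already-proved injective characterization of Theorem~\ref{them-coherent-regular-characterizations-injective}, the delicate point there being to reflect contractibility back along $\Hom_\Z(-,\Q)$.
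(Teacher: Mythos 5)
Your proposal lands on the same two pillars as the paper's own proof, which is literally a one-line citation: combine \cite[\S3]{Iacob-Iyengar-regular} with Proposition~\ref{prop-characterize-fp}. The paper takes the mutual equivalence of conditions (16)--(20), and their equivalence to the finiteness of projective dimension for finitely presented left modules, as already established by Iacob--Iyengar, and its only new content is translating that finiteness condition into ``left regular'' via Proposition~\ref{prop-characterize-fp}. What you do differently is open up the black box: you sketch the contraderived analogue of Proposition~\ref{prop-regular-coderived-char}, reduce everything to whether $\bar\gamma_{\textnormal{ctr}}$ restricts to an equivalence on compact objects, and correctly locate both the role of left coherence (it is what makes the homotopy category of complexes of projective \emph{right} modules compactly generated, with compacts anti-equivalent to $\class{D}^b(R\textnormal{-mod})$ of finitely presented \emph{left} modules via $R\Hom_R(-,R)$, per J{\o}rgensen and Neeman) and the fact that this identification is the genuinely hard input. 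Since you then defer exactly that input back to \cite[\S3]{Iacob-Iyengar-regular}, your argument is a faithful expansion of the paper's citation rather than an independent proof; the expansion is illuminating for the reader but does not discharge the technical debt, and the paper deliberately chooses not to reprove it.

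One caution on your fallback route: applying $\Hom_{\Z}(-,\mathbb{Q}/\Z)$ to an exact complex $P$ of projective right modules and invoking Theorem~\ref{them-coherent-regular-characterizations-injective} makes $P^{+}$ a contractible complex of injectives with injective cycles, but reflecting this back only forces the cycles of $P$ to be \emph{flat}, not projective. Passing from ``acyclic complex of projectives with flat cycles'' to ``contractible'' is a separate nontrivial theorem of Neeman (a projective-periodicity statement), so the delicate point you flag is a real gap that the character-module bridge does not close by itself.
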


\begin{theorem}[Flat characterizations of coherent regular rings--\S3 of~\cite{Iacob-Iyengar-regular}]\label{them-coherent-regular-characterizations-flat}
Let $R$ be a left coherent ring. Then the  following are equivalent:
\begin{enumerate}
\item $R$ is left regular.
\setcounter{enumi}{22}
\item\label{item-flat-one}  Every chain complex of flat right $R$-modules is
  DG-flat. That is, $\dwclass{F} = \dgclass{F}$.
\item  Every exact complex of flat right $R$-modules is  categorically flat, equivalently, pure exact. That is,  $\exclass{F} = \tilclass{F}$.
\item The class $\dgclass{F}$, of DG-flat complexes of right $R$-modules, is closed under products.
\item $\text{gr-fd}_R(X) = \text{fd}_R(X)$ for all complexes of right $R$-modules $X$.
\item\label{item-proj-five} $\text{gr-fd}_R(M) = \text{fd}_R(M)$ for all right $R$-modules $M$.
\end{enumerate}
\end{theorem}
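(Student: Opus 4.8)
The plan is to deduce the theorem from the already-established injective characterization, Theorem~\ref{them-coherent-regular-characterizations-injective}, by means of Pontryagin (character) duality $(-)^{+} := \Hom_{\Z}(-,\Q)$, which sends a complex of right $R$-modules to a complex of left $R$-modules. The functor $(-)^{+}$ is exact and faithful, and it is self-dual in the sense that there is a natural map $e_{Y}\colon Y \to Y^{++}$ satisfying the triangle identity $(e_{Y})^{+}\circ e_{Y^{+}} = 1_{Y^{+}}$; in particular $Y^{+}$ is a direct summand of $Y^{+++}$ in the category of chain complexes. I would use three facts. First, by Lambek's theorem (see~\cite{enochs-jenda-book}) a right module $N$ is flat if and only if $N^{+}$ is an injective left module. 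Second, since $R$ is left coherent, every FP-injective left module---and hence every injective left module---has flat character dual~\cite{stenstrom-fp}. Third, a complex $X$ is pure exact if and only if $X^{+}$ is contractible, and the dual of a contractible complex is again contractible.

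The equivalences among the five flat conditions follow the same pattern as the FP-injective characterizations in Theorem~\ref{them-coherent-regular-characterizations}, now applied to the flat cotorsion pair for right $R$-modules and its associated model structure; these are precisely the equivalences established by Iacob and Iyengar in~\cite[\S3]{Iacob-Iyengar-regular}. The only genuinely new input is to tie this list to left regularity, so it suffices to prove that $R$ is left regular if and only if $\exclass{F} = \tilclass{F}$. For the forward direction, assume $R$ is left regular, so by Theorem~\ref{them-coherent-regular-characterizations-injective} we have $\exclass{I} = \tilclass{I}$ for left modules. Given an exact complex $X$ of flat right modules, each $X_{n}^{+}$ is injective and $X^{+}$ is exact, so $X^{+}\in\exclass{I}=\tilclass{I}$ is contractible; hence $X$ is pure exact, and as its cycles are pure submodules of flat modules they are flat, giving $X\in\tilclass{F}$.

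For the converse, I would assume $\exclass{F}=\tilclass{F}$ and show $\exclass{I}=\tilclass{I}$, which by Theorem~\ref{them-coherent-regular-characterizations-injective} forces $R$ to be left regular. Let $J$ be an exact complex of injective left modules. Each $J_{n}$ is FP-injective, so each $J_{n}^{+}$ is flat and $J^{+}$ is an exact complex of flat right modules; by hypothesis $J^{+}\in\tilclass{F}$ is pure exact, so $J^{++}$ is contractible, whence $J^{+++}$ is contractible as well. The triangle identity exhibits $J^{+}$ as a direct summand of $J^{+++}$ in the chain complex category, so $J^{+}$ is contractible. But $J^{+}$ contractible means exactly that $J$ is pure exact; being a pure exact complex of injectives, $J$ is then contractible by Stovicek's result~\cite{stovicek-purity}. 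Thus $\exclass{I}=\tilclass{I}$, as desired.

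The hard part will be this converse. Character duality is not essentially surjective, so one cannot simply pull an arbitrary exact complex of injective left modules back to a flat right complex; the device that circumvents this is the passage to the double dual together with the triangle identity. Here left coherence is essential: it guarantees that $J^{+}$ lands among flat complexes, after which the hypothesis $\exclass{F}=\tilclass{F}$ forces $J^{++}$ and hence $J^{+++}$ contractible, and the retraction $J^{+}\hookrightarrow J^{+++}$ upgrades \emph{pure exact} to \emph{contractible} exactly where it is needed. The points requiring care will be checking that $(-)^{+}$ converts the defining degreewise short exact sequences of a complex into split, respectively pure, sequences as claimed, so that the characterization of $\tilclass{F}$ and $\tilclass{I}$ via purity transfers correctly, and confirming that Stovicek's contractibility theorem is applicable to $J$.
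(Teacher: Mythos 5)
Your argument is correct, but it takes a genuinely different route from the paper's. The paper disposes of this theorem in one line, citing the equivalences already established in \cite[\S3]{Iacob-Iyengar-regular} and identifying their ring-theoretic hypothesis with left regularity via Proposition~\ref{prop-characterize-fp}; no new argument is given there. You instead derive the key link---$R$ is left regular if and only if $\exclass{F}=\tilclass{F}$ for right modules---from the injective characterization (Theorem~\ref{them-coherent-regular-characterizations-injective}) by character duality. The ingredients all check out: Lambek's criterion; the fact that over a left coherent ring the character dual of an FP-injective left module is a flat right module (via $\Tor_i^R(A^+,L)\cong\Ext^i_R(L,A)^+$ for $L$ finitely presented); the equivalence ``$X$ is pure exact iff $X^+$ is contractible'' (via $\Hom_R(Y,X^+)\cong (Y\otimes_R X)^+$ together with exactness and faithfulness of $(-)^+$); and the retract identity $(e_J)^+\circ e_{J^+}=1_{J^+}$, which is exactly the right device for upgrading ``$J^{++}$ contractible'' to ``$J^+$ contractible'' despite $(-)^+$ not being essentially surjective. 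The closing appeal to Stovicek (a pure exact complex of injectives is contractible) is the same fact the paper itself uses in proving Theorem~\ref{them-coherent-regular-characterizations-injective}. Two remarks: like the paper, you still outsource the internal equivalences (21)--(25) to \cite[\S3]{Iacob-Iyengar-regular}---and note that (23) concerns products rather than direct limits, so ``the same pattern as Theorem~\ref{them-coherent-regular-characterizations}'' quietly requires Chase's theorem (products of flat right modules are flat precisely over left coherent rings) and an inverse-limit argument in place of the telescope; and your duality mechanism is essentially the one underlying Iacob--Iyengar's own \S3, so what your approach buys is a self-contained derivation of the flat theorem from the injective one inside this paper's framework, at the cost of reproving part of the cited result.
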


\begin{proof}[Proof of Theorems~\ref{them-coherent-regular-characterizations-projective} and~\ref{them-coherent-regular-characterizations-flat}]
Combine the results of~\cite[\S3]{Iacob-Iyengar-regular} with the characterization of left coherent regular rings given in Theorem~\ref{prop-characterize-fp}.
\end{proof}

Again, there are versions of all of the above results for \emph{right} coherent regular rings. Combining, we obtain characterizations of two-sided coherent regular rings:

\begin{corollary}\label{cor-regular}
Let $R$ be a (two-sided) coherent ring. Then the following are equivalent: 
\begin{enumerate}
\item $R$ is (two-sided) regular, i.e, a coherent regular ring.
\item One of the conditions from Theorem~\ref{them-coherent-regular-characterizations} or Theorem~\ref{them-coherent-regular-characterizations-injective} holds for left  $R$-modules,  and  one of the conditions from Theorem~\ref{them-coherent-regular-characterizations-projective} or Theorem~\ref{them-coherent-regular-characterizations-flat} also holds for left $R$-modules.
\item One of the conditions from Theorem~\ref{them-coherent-regular-characterizations} or Theorem~\ref{them-coherent-regular-characterizations-injective} holds for right  $R$-modules, and  one of the conditions from Theorem~\ref{them-coherent-regular-characterizations-projective} or Theorem~\ref{them-coherent-regular-characterizations-flat} also holds for right  $R$-modules.
\end{enumerate}
\end{corollary}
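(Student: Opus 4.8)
The plan is to unwind the definition of (two-sided) regularity and match each half of conditions (2) and (3) against the appropriate theorem, keeping careful track of which side the modules live on. Recall that $R$ is two-sided regular precisely when it is both left regular and right regular. So it suffices to show that each of (2) and (3) is equivalent to the conjunction ``$R$ is left regular and $R$ is right regular,'' which is exactly (1).

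First I would handle condition (2). Since $R$ is left coherent, Theorems~\ref{them-coherent-regular-characterizations} and~\ref{them-coherent-regular-characterizations-injective} apply verbatim: the FP-injective and injective conditions stated there, for \emph{left} $R$-modules, are each equivalent to $R$ being left regular. For the second half of (2) the point to observe is the side-swap built into Theorems~\ref{them-coherent-regular-characterizations-projective} and~\ref{them-coherent-regular-characterizations-flat}: there, for a left coherent ring, the projective and flat conditions characterizing left regularity are phrased in terms of \emph{right} $R$-modules. Because $R$ is also right coherent, the right-handed mirror of these two theorems says that the projective and flat conditions for \emph{left} $R$-modules are equivalent to $R$ being right regular. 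Thus condition (2) reads ``$R$ is left regular and $R$ is right regular,'' which is (1).

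Condition (3) is the mirror image. The right-handed versions of Theorems~\ref{them-coherent-regular-characterizations} and~\ref{them-coherent-regular-characterizations-injective}, available since $R$ is right coherent, equate the FP-injective and injective conditions for \emph{right} $R$-modules with right regularity; meanwhile Theorems~\ref{them-coherent-regular-characterizations-projective} and~\ref{them-coherent-regular-characterizations-flat} as stated, using left coherence, equate the projective and flat conditions for \emph{right} $R$-modules with left regularity. Hence (3) again unwinds to ``$R$ is right regular and $R$ is left regular,'' i.e.\ (1).

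The only real subtlety---and the step I expect to trip up a careless reader rather than to pose a genuine mathematical difficulty---is this bookkeeping of sides: the injective-flavored characterizations and the projective-flavored ones attach to \emph{opposite} sides for a fixed one-sided coherent ring, so one must invoke both a theorem and its mirror in order to capture left and right regularity simultaneously. Once the dictionary ``FP-injective/injective $\leftrightarrow$ same side, projective/flat $\leftrightarrow$ opposite side'' is made explicit, the equivalences (1)$\Leftrightarrow$(2)$\Leftrightarrow$(3) are immediate, and no new argument beyond the cited theorems and their left–right symmetric analogues is required.
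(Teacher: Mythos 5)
Your proof is correct and is essentially the paper's own argument: the paper gives no explicit proof beyond the remark that one combines the stated theorems with their right-handed mirrors, and your careful bookkeeping of which side each characterization attaches to (FP-injective/injective on the same side as coherence and regularity, projective/flat on the opposite side) is exactly the content of that combination.
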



 \section*{Acknowledgements}
 The authors would like to thank Ioannis Emmanouil  for providing valuable feedback on the first draft of this paper.  We also thank him for sharing his related work with Talelli, \cite{emmanouil-tatelli-triviality-criteria}, and for the insightful discussions that followed regarding the relationships between the two works.

\end{document}